\newcommand\Fcal{{\mathcal F}}
\newcommand\E{\mb{E}}
\newcommand\mb[1]{\mathbb{ #1 }}
\newcommand\xk{x_k}
\newcommand{\argmin}{\operatornamewithlimits{argmin}}
\newcommand\Real{{\mathbb R}}
\newcommand\defin{:=}
\newcommand\mtd{{\mathcal M}}
\newcommand\vs{\vspace*{-0.15cm}}
\newtheorem{theorem}{Theorem}
\newtheorem{proposition}[theorem]{Proposition}
\newtheorem{lemma}[theorem]{Lemma}
\newtheorem{definition}[theorem]{Definition}
\newtheorem{corollary}[theorem]{Corollary}
\newcommand\kmone{{k\text{--}1}}
\newcommand\kmtwo{{k\text{--}2}}
\newcommand\kmthree{{k\text{--}3}}
\newcommand\jmone{{j\text{--}1}}
\newcommand\eg{{\it e.g.}}
\newenvironment{itemize*}%
{\begin{itemize}%
\vspace*{-0.1cm}
   \setlength{\itemsep}{0pt}%
      \setlength{\parskip}{0pt}%
      \setlength{\topskip}{0pt}%
      \setlength{\parsep}{0pt}%
      \setlength{\partopsep}{0pt}%
      \setlength{\itemindent}{0pt}%
      \setlength{\topsep}{0pt}}%
{\end{itemize}
\vspace*{-0.1cm}
}
\title{A Generic Acceleration Framework \\ for Stochastic Composite Optimization}
\author{%
   Andrei Kulunchakov and Julien Mairal \\
      Univ. Grenoble Alpes, Inria, CNRS, Grenoble INP, LJK, 38000 Grenoble, France\\
      \texttt{andrei.kulunchakov@inria.fr and julien.mairal@inria.fr} \\
}
\begin{document}

\maketitle

\begin{abstract}
In this paper, we introduce various mechanisms to obtain accelerated first-order stochastic
optimization algorithms when the objective function is convex or strongly
convex. Specifically, we extend the Catalyst approach originally designed for
deterministic objectives to the stochastic setting. Given an optimization
method with mild convergence guarantees for strongly convex problems,
the challenge is to accelerate convergence to a noise-dominated region, and
then achieve convergence with an optimal worst-case complexity depending on the
noise variance of the gradients.
A side contribution of our work is also a generic analysis that can
handle inexact proximal operators, providing new insights about the robustness of 
stochastic algorithms when the proximal operator cannot be exactly computed.
 
\end{abstract}

\section{Introduction}\label{sec:intro}
In this paper, we consider stochastic composite optimization problems of the form
\begin{equation}
   \min_{x \in \Real^p} \left\{  F(x) \defin f(x)  + \psi(x)  \right\}~~~~\text{with}~~~~ f(x) = \E_{\xi}[ \tilde{f}(x,\xi)], \label{eq:risk}
\end{equation}
where the function $f$ is convex, or $\mu$-strongly convex, and $L$-smooth (meaning differentiable
with $L$-Lipschitz continuous gradient), and $\psi$ is a possibly non-smooth convex  lower-semicontinuous
function. For instance, $\psi$ may be the 
$\ell_1$-norm, which is known to induce sparsity, 
or an indicator function of a convex set~\cite{hiriart_urruty_lemarechal_1993ii}.
The random variable~$\xi$ corresponds to data samples. When the amount of training data
is finite, the expectation $\E_{\xi}[ \tilde{f}(x,\xi)]$ can be replaced by a
finite sum, a setting that has attracted a lot of attention in machine learning
recently, see, \eg, 
\cite{saga,defazio2014finito,gower2018stochastic,konevcny2017semi,miso,sarah,proxsvrg} for incremental algorithms and~\cite{accsvrg,kovalev2019don,conjugategradient,catalyst_jmlr,accsdca,zhou2018direct,zhou2018simple} for accelerated variants.

Yet, as noted in \cite{bottou2008tradeoffs}, one is typically not interested in
the minimization of the empirical risk---that is, a finite sum of functions---with
high precision, but instead, one should focus on the expected risk involving the
true (unknown) data distribution. When one can draw an infinite number of
samples from this distribution, the true risk~(\ref{eq:risk}) may be minimized
by using appropriate stochastic optimization techniques. Unfortunately, 
fast methods designed for deterministic objectives would not apply
to this setting; methods based on stochastic approximations
admit indeed optimal ``slow'' rates that are typically $O(1/\sqrt{k})$ for convex functions and $O(1/k)$
for strongly convex ones, depending on the exact assumptions made on the problem, where $k$ is the number of noisy gradient evaluations~\cite{nemirovski}.

Better understanding the gap between deterministic and stochastic optimization
is one goal of this paper. Specifically, we are interested in Nesterov's acceleration
of gradient-based approaches~\cite{nesterov1983,nesterov}. In a nutshell, gradient descent
or its proximal variant applied to a $\mu$-strongly convex $L$-smooth function achieves an
exponential convergence rate $O((1-\mu/L)^k)$ in the worst case in function values, and a sublinear
rate $O(L/k)$ if the function is simply convex ($\mu=0$). By interleaving the algorithm with clever
extrapolation steps, Nesterov showed that faster convergence could be achieved,
and the previous convergence rates become $O((1-\sqrt{\mu/L})^k)$ and
$O(L/k^2)$, respectively.
Whereas no clear geometrical intuition seems to appear in the literature to
explain why acceleration occurs, proof techniques to show accelerated
convergence~\cite{fista,nesterov,tseng} and extensions to a large class of other
gradient-based algorithms are now well
established~\cite{accsvrg,chambolle2015remark,catalyst_jmlr,nesterov2012,accsdca}.

Yet, the effect of Nesterov's acceleration to stochastic objectives remains
poorly understood since existing unaccelerated algorithms such as stochastic
mirror descent~\cite{nemirovski} and their variants already achieve the optimal
asymptotic rate.  Besides, negative results also exist, showing that Nesterov's
method may be unstable when the gradients are computed
approximately~\cite{d2008smooth,inexactnesterov}.  Nevertheless, several
approaches such as
\cite{aybat2019universally,cohen2018acceleration,devolder2011stochastic,ghadimi2012optimal,ghadimi2013optimal,kwok2009,kulunchakov2019estimate,Lan2012,xiao2010dual}
have managed to show that acceleration may be useful to forget faster the
algorithm's initialization and reach a region dominated by the noise of
stochastic gradients; then, ``good'' methods are expected to asymptotically converge with a
rate exhibiting an optimal dependency in the noise variance~\cite{nemirovski},
but with no dependency on the initialization.  A major challenge  is then to achieve the optimal rate for these two
regimes.

In this paper, we consider an optimization method $\mtd$ with the following
property: given an auxiliary strongly convex objective
function $h$, we assume that $\mtd$ is able to produce iterates $(z_t)_{t \geq0}$ with expected linear convergence to a noise-dominated region---that is, such that
\begin{equation}
   \E[h(z_t) - h^\star] \leq C (1-\tau)^t (h(z_0)-h^\star) + B \sigma^2, \label{eq:aux}
\end{equation}
where $C, \tau, B  > 0$, $h^\star$ is the minimum
function value, and $\sigma^2$ is an upper bound on the variance of stochastic 
gradients accessed by~$\mtd$, which we assume to be uniformly bounded. Whereas such an assumption  
has limitations, it remains the most standard  one
for stochastic optimization (see~\cite{bottou2018optimization,nguyen2018sgd} for more realistic
settings in the smooth case).
The class of methods satisfying~(\ref{eq:aux}) is relatively large.  For
instance, when $h$ is $L$-smooth, the stochastic gradient descent method (SGD) with
constant step size $1/L$ and iterate averaging satisfies~(\ref{eq:aux}) with $\tau=\mu/L$, $B=1/L$, and $C=1$, see~\cite{kulunchakov2019estimate}.

\paragraph{Main contribution.}
In this paper, we extend the Catalyst
approach~\cite{catalyst_jmlr} to general stochastic problems.\footnote{All objectives addressed by the original Catalyst approach are deterministic, even though they may be large finite sums. Here, we consider general expectations as defined in~(\ref{eq:risk}).} Under mild conditions,
our approach is able to turn~$\mtd$ into a converging algorithm 
with a worst-case expected complexity that decomposes into two parts: the first
one exhibits an accelerated convergence rate in the sense of Nesterov and shows
how fast one forgets the initial point; the second one corresponds to the
stochastic regime and typically depends (optimally in many cases) on~$\sigma^2$.
Note that even though we only make assumptions about the behavior of~$\mtd$ on strongly convex sub-problems~(\ref{eq:aux}), we also treat the case where the objective~(\ref{eq:risk}) is convex, but not strongly convex.

To illustrate the versatility of our approach, we consider the stochastic finite-sum
problem \cite{smiso,hofmann_variance_2015,lan2018random,zheng2018lightweight},
where the objective~(\ref{eq:risk}) decomposes into $n$ components
$\tilde{f}(x,\xi) = \frac{1}{n}\sum_{i=1}^n \tilde{f}_i(x,\xi)$ and $\xi$ is  
a stochastic perturbation, coming, \eg, from data augmentation or noise
injected during training to improve generalization or privacy
(see~\cite{kulunchakov2019estimate,miso}). The underlying finite-sum structure may also result from clustering
assumptions on the data~\cite{hofmann_variance_2015}, or from distributed computing~\cite{lan2018random}, a setting beyond the scope of our paper.
Whereas it was shown in~\cite{kulunchakov2019estimate} that classical variance-reduced stochastic optimization methods such as SVRG~\cite{proxsvrg},
SDCA~\cite{accsdca}, SAGA~\cite{saga}, or MISO~\cite{miso}, can be made robust to noise,
the analysis of~\cite{kulunchakov2019estimate} is only able to accelerate the SVRG approach. 
With our acceleration technique, all of the aforementioned methods can be modified such that they find a point~$\hat{x}$ satisfying $\E[F(\hat{x})-F^\star] \leq \varepsilon$ with global iteration complexity, for the $\mu$-strongly convex case,
\begin{equation}
   \tilde{O}\left( \left(n + \sqrt{n\frac{L}{\mu}}\right)\log\left(\frac{F(x_0)-F^\star}{\varepsilon} \right)   +  \frac{{\sigma}^2}{\mu\varepsilon} \right).  \label{eq:cplx_finitesum}
\end{equation}
The term on the left is the optimal complexity for finite-sum optimization~\cite{accsvrg,tightbound_yossi}, up to logarithmic terms in $L,\mu$ hidden in the $\tilde{O}(.)$ notation, and the term on the right is the optimal complexity for $\mu$-strongly convex stochastic objectives~\cite{ghadimi2012optimal} where $\sigma^2$ is due to the perturbations $\xi$. As Catalyst~\cite{catalyst_jmlr}, the price to pay compared to non-generic direct acceleration techniques~\cite{accsvrg,kulunchakov2019estimate} is a logarithmic factor.

\paragraph{Other contributions.}
In this paper, we generalize the analysis of
Catalyst~\cite{catalyst_jmlr,paquette2017catalyst} to handle various new cases.
Beyond the ability to deal with stochastic optimization problems, our approach (i) improves 
Catalyst by allowing sub-problems of the form~(\ref{eq:aux}) to be solved approximately \emph{in expectation}, which is more realistic than the deterministic requirement made in~\cite{catalyst_jmlr} and which is also critical for stochastic optimization, (ii)
leads to a new accelerated stochastic gradient descent algorithms for composite optimization with similar guarantees as~\cite{ghadimi2012optimal,ghadimi2013optimal,kulunchakov2019estimate},
(iii) handles the analysis of accelerated proximal gradient descent methods
with inexact computation of proximal operators, improving the results
of~\cite{schmidt2011convergence} while also treating the stochastic setting.

Finally, we note that the extension of Catalyst we propose is easy to
implement.  The original Catalyst method introduced in~\cite{catalyst_nips}
indeed required solving a sequence of sub-problems while controlling carefully
the convergence, \eg, with duality gaps. For this reason, Catalyst has sometimes been
seen as theoretically appealing but not practical enough~\cite{scieur2017nonlinear}.
Here, we focus on a simpler and more practical variant presented later
in~\cite{catalyst_jmlr}, which consists of solving sub-problems with a fixed
computational budget, thus removing the need to define stopping criterions for sub-problems.
The code used for our experiments is available here: \url{http://github.com/KuluAndrej/NIPS-2019-code}.

\section{Related Work on Inexact and Stochastic Proximal Point Methods.}
Catalyst is based on the inexact accelerated proximal point
algorithm~\cite{newppa}, which consists in solving approximately a sequence of sub-problems and updating two sequences $(x_k)_{k \geq 0}$ and $(y_k)_{k \geq 0}$ by
\begin{equation}
    x_k \approx \argmin_{x \in \Real^p} \left\{ h_k(x) \defin F(x) + \frac{\kappa}{2}\|x - y_\kmone\|^2 \right\} ~~~~\text{and}~~~~ y_k = x_k + \beta_k(x_k-x_\kmone), \label{eq:ppa}
\end{equation}
where~$\beta_k$ in $(0,1)$ is obtained from Nesterov's acceleration
principles~\cite{nesterov},~$\kappa$ is a well chosen regularization parameter, and~$\|\cdot\|^2$ is the Euclidean norm. The method~$\mtd$ is used to obtain an approximate
minimizer of $h_k$; when $\mtd$
converges linearly, it may be shown that the resulting algorithm~(\ref{eq:ppa})
enjoys a better worst-case complexity than if $\mtd$ was used directly on $f$, see~\cite{catalyst_jmlr}.

Since asymptotic linear convergence is out of reach when $f$ is a stochastic
objective, a classical strategy consists in replacing $F(x)$ in~(\ref{eq:ppa})
by a finite-sum approximation obtained by random sampling, leading to
deterministic sub-problems. Typically without Nesterov's acceleration (with $y_k=x_k$),
this strategy is often called the stochastic proximal point
method~\cite{asi2018stochastic,bertsekas2011incremental,kulis2010implicit,toulis2015stable,toulis2016towards}.
The point of view we adopt in this paper is different and is based on the
minimization of surrogate functions $h_k$ related to~(\ref{eq:ppa}), but which 
are more general and may take other forms than $F(x)+\frac{\kappa}{2}\|x-y_\kmone\|^2$.

\section{Preliminaries: Basic Multi-Stage Schemes}\label{sec:restart}
In this section, we present two simple multi-stage mechanisms to improve
the worst-case complexities of stochastic optimization methods, before introducing acceleration principles.

\vs
\paragraph{Basic restart with mini-batching or decaying step sizes.}
Consider an optimization method $\mtd$ with convergence rate~(\ref{eq:aux})
and assume that there exists a hyper-parameter to control a trade-off between the bias $B\sigma^2$ and 
the computational complexity.
Specifically, we assume that the bias can be reduced by an arbitrary factor
$\eta < 1$, while paying a factor $1/\eta$ in
terms of complexity per iteration (or $\tau$ may be reduced by a factor $\eta$, thus slowing down convergence).
This may occur in two cases:
\begin{itemize*}
   \item by using a mini-batch of size $1/\eta$ to sample gradients, which replaces $\sigma^2$ by $\eta \sigma^2$;
   \item or the method uses a step size proportional to $\eta$ that can be chosen arbitrarily small.
 \end{itemize*}
      For instance, stochastic gradient descent with constant step size and iterate averaging is compatible with both scenarios~\cite{kulunchakov2019estimate}.  Then, 
      consider a target accuracy $\varepsilon$ and define
      the sequences $\eta_k = 1/2^k$ and 
      $\varepsilon_k = 2B \sigma^2 \eta_k$ for $k \geq 0$.
      We may now solve successively the problem up to accuracy~$\varepsilon_k$---\eg, with a constant number $O(1/\tau)$ steps of $\mtd$ when using mini-batches of size $1/\eta_k=2^k$ to reduce the bias---and by using the solution of iteration $\kmone$ as a warm restart.
      As shown in Appendix~\ref{appendix:restart}, the scheme converges and
      the worst-case complexity to achieve the accuracy $\varepsilon$ in expectation is 
         \begin{equation}
            O\left( \frac{1}{\tau} \log\left( \frac{C (F(x_0)-F^\star)}{\varepsilon}  \right)+ \frac{B \sigma^2 \log(2C)}{\tau \varepsilon}   \right).\label{eq:restart2}
            \end{equation}
            For instance, one may run SGD with constant step size $\eta_k/L$ at stage $k$ with iterate averaging as in~\cite{kulunchakov2019estimate}, which yields $B=1/L$, $C=1$, and $\tau = \mu/L$. Then, the left term is the classical complexity $O((L/\mu)\log(1/\varepsilon))$ of the (unaccelerated) gradient descent algorithm for deterministic objectives, whereas the right term is the optimal complexity for stochastic optimization in $O(\sigma^2/\mu\varepsilon)$. Similar restart principles appear for instance in~\cite{aybat2019universally} in the design of a multistage accelerated SGD algorithm.

\vs
\paragraph{Restart: from sub-linear to linear rate with strong convexity.}
A natural question is whether asking for a linear rate in~(\ref{eq:aux}) for
strongly convex problems is a strong requirement. Here, we
show that a sublinear rate is in fact sufficient for our needs by 
generalizing a restart technique introduced in~\cite{ghadimi2013optimal} for stochastic
optimization, which  was previously used for
deterministic objectives in~\cite{iouditski2014primal}.

Specifically, consider an optimization method~$\mtd$ such that the convergence rate~(\ref{eq:aux}) is replaced by 
 \begin{equation}
   \E[h(z_t) - h^\star] \leq \frac{D \|z_0-z^\star\|^2}{2t^d} + \frac{B \sigma^2}{2}, \label{eq:aux2}
\end{equation}
where $D, d > 0$ and $z^\star$ is a minimizer of~$h$.
Assume now that $h$ is $\mu$-strongly convex with $D \geq \mu$ and 
consider restarting $s$ times the method~$\mtd$, each time running $\mtd$ for constant $t' = \lceil (2D/\mu)^{1/d} \rceil$ iterations.
Then, it may be shown (see Appendix~\ref{appendix:restart}) that the
relation~(\ref{eq:aux}) holds with constant $t=s t'$, $\tau = \frac{1}{2t'}$, and $C=1$.
If a mini-batch or step size mechanism is available, we may then proceed as
before and obtain a converging scheme with
complexity~(\ref{eq:restart2}), \eg, by using mini-batches of exponentially increasing sizes once the method reaches a noise-dominated region, and by using a restart frequency 
of order $O(1/\tau)$.

\section{Generic Multi-Stage Approaches with Acceleration}
\newcommand\HypA{($\mathcal H_1$)}
\newcommand\HypB{($\mathcal H_2$)}
\newcommand\HypC{($\mathcal H_3$)}
\newcommand\HypCb{($\mathcal H_4$)}
\newcommand\hypA{$\mathcal H_1$}
\newcommand\hypB{$\mathcal H_2$}
\newcommand\hypC{$\mathcal H_3$}
\newcommand\hypCb{($\mathcal H_4$)}
We are now in shape to introduce a generic acceleration framework that
generalizes~(\ref{eq:ppa}). Specifically, given some
point~$y_\kmone$ at iteration~$k$, we consider
a surrogate function $h_k$ related to a parameter $\kappa >0$, an approximation error $\delta_k \geq 0$, and an optimization method~$\mtd$ that satisfy the following properties:
\begin{itemize}
\addtolength{\leftmargin}{-0.5cm}
\item[\HypA] $h_k$ is $(\kappa+\mu)$-strongly convex, where $\mu$ is the strong convexity parameter of $f$; 
\item[\HypB] $\E[h_k(x) | {\mathcal F}_{\kmone}] \leq F(x) + \frac{\kappa}{2}\|x-y_{\kmone}\|^2$ for $x=\alpha_\kmone x^\star+(1-\alpha_\kmone)x_\kmone$, which is deteministic given the past information $\Fcal_\kmone$ up to iteration $\kmone$ and $\alpha_\kmone$ is given in Alg.~\ref{alg:meta};
\item[\HypC] $\mtd$ can provide the exact minimizer~$x_k^\star$ of $h_k$ and a point $x_k$ (possibly equal to $x_k^\star$) such that $\E[F(x_k)] \leq
\E[h_k^\star] + \delta_k$ where $h_k^\star = \min_{x} h_k(x)$.
\end{itemize}
The generic acceleration
framework is presented in Algorithm~\ref{alg:meta}.
Note that the conditions on $h_k$ bear similarities with estimate sequences
introduced by Nesterov~\cite{nesterov}; indeed, \HypC~is a direct generalization of (2.2.2) from~\cite{nesterov} and \HypB~resembles (2.2.1).  However, the choices of $h_k$
and the proof technique are significantly different, as we will see with
various examples below.
We also assume at the moment that the exact minimizer~$x_k^\star$ of $h_k$ is available, which differs from 
the Catalyst framework~\cite{catalyst_jmlr}; 
the case with approximate minimization  will 
be presented in Section~\ref{sec:variantB}.

\begin{algorithm}[hbtp]
\caption{Generic Acceleration Framework with Exact Minimization of $h_k$} \label{alg:meta}
\begin{algorithmic}[1]
\STATE {\bfseries Input:} $x_0$ (initial estimate); $\mtd$ (optimization method); $\mu$ (strong convexity constant); $\kappa$ (parameter for $h_k$); $K$ (number of iterations); $(\delta_k)_{k \geq 0}$ (approximation errors);
\STATE {\bfseries Initialization:} $y_0=x_0$; $q=\frac{\mu}{\mu+\kappa}$; $\alpha_0=1$ if $\mu=0$ or $\alpha_0 = \sqrt{q}$ if $\mu \neq 0$;
\FOR{ $k=1,\ldots,K$}
\STATE Consider a surrogate $h_k$ satisfying \HypA, \HypB ~and obtain $x_k, x_k^\star$ using $\mtd$ satisfying \HypC;
\STATE Compute $\alpha_k$ in $(0,1)$ by solving the equation
$\alpha_k^2 = (1-\alpha_k)\alpha_\kmone^2 + q \alpha_k$.
\STATE Update the extrapolated sequence
\begin{equation}
y_k = x_k^\star + \beta_k(x_k^\star - x_\kmone) + \frac{(\kappa+\mu)(1-\alpha_k)}{\kappa}(x_k - x_k^\star) ~~~\text{with}~~~ \beta_k = \frac{\alpha_\kmone (1-\alpha_\kmone)}{\alpha_\kmone^2 + \alpha_k}. \label{eq:variantA}
\end{equation}
\ENDFOR
\STATE {\bfseries Output:} $\xk$ (final estimate).
\end{algorithmic}
\end{algorithm}

\begin{proposition}[Convergence analysis for Algorithm~\ref{alg:meta}]\label{prop:A}
Consider Algorithm~\ref{alg:meta}. Then, 
         \begin{equation}
         \E[F(x_k)-F^\star] \leq \left\{  
         \begin{array}{ll}
         (1-\sqrt{q})^k\left(2(F(x_0)-F^\star) + \sum_{j=1}^k (1-\sqrt{q})^{-j} \delta_j\right)  & \text{if}~\mu \neq 0 \\
            \frac{2}{(k+1)^2}\left(\kappa\|x_0-x^\star\|^2 + \sum_{j=1}^k \delta_j(j+1)^2\right)  & \text{otherwise} \\
            \end{array}
            \right..   \label{eq:propAb}
            \end{equation}
            \end{proposition}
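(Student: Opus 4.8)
The plan is to run a Nesterov-type estimate-sequence argument in conditional expectation, in the spirit of the Catalyst analysis~\cite{catalyst_jmlr} but arranged so that~\HypB~and~\HypC, which only hold in expectation, are enough. Let $\Fcal_\kmone$ be the information available up to iteration~$\kmone$, write $\bar x_\kmone \defin \alkm\xopt + (1-\alkm)\xkm$ for the point featured in~\HypB, and introduce the auxiliary sequence $v_k \defin \xkm + \tfrac1{\alkm}(x_k^\star - \xkm)$, chosen precisely so that $\bar x_\kmone - x_k^\star = \alkm(\xopt - v_k)$. The quantity I would track is the Lyapunov potential
\eq{ \ml{L}_k \defin \E[\f(\xk)-\fopt] + \tfrac{(\kpa+\mu)\alkm^2}{2}\,\E\norm{\xopt - v_k}, }
with the conventions $v_0 \defin x_0$ and $\alpha_{-1}\defin\alpha_0$; note that the recursion $\alk^2 = (1-\alk)\alkm^2 + q\alk$ together with $q(\kpa+\mu)=\mu$ makes the weights $(\kpa+\mu)\alk^2$ satisfy Nesterov's recursion $\theta_k = (1-\alk)\theta_\kmone + \mu\alk$, which is the algebraic reason the potential telescopes.

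\emph{Step 1: one-step inequality.} Since $x_k^\star$ exactly minimizes $h_k$, \HypA~gives $h_k^\star \le h_k(\bar x_\kmone) - \tfrac{\kpa+\mu}{2}\norm{\bar x_\kmone - x_k^\star}$. Taking $\E[\,\cdot\mid\Fcal_\kmone]$ (legitimate since $\bar x_\kmone$ and $\ykm$ are $\Fcal_\kmone$-measurable), bounding $\E[h_k(\bar x_\kmone)\mid\Fcal_\kmone]$ by~\HypB, rewriting $\norm{\bar x_\kmone-x_k^\star} = \alkm^2\norm{\xopt-v_k}$, applying $\mu$-strong convexity of $F$ at $\bar x_\kmone$, and finally~\HypC, I obtain
\eq{ \E[\f(\xk)-\fopt\mid\Fcal_\kmone] &+ \tfrac{(\kpa+\mu)\alkm^2}{2}\E[\norm{\xopt-v_k}\mid\Fcal_\kmone] \\ &\le (1-\alkm)(\f(\xkm)-\fopt) + \tfrac\kpa2\norm{\bar x_\kmone - \ykm} - \tfrac{\mu\alkm(1-\alkm)}{2}\norm{\xopt-\xkm} + \dak. }

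\emph{Step 2: controlling the extrapolation term.} This is the crux. Substituting the definition of $v_\kmone$ (so that $x_\kmone^\star-\xkmm = \alpha_{k-2}(v_\kmone-\xkmm)$) into the update~\eqref{eq:variantA} and simplifying with the explicit value of $\beta_\kmone$ and the consequences of the $\alpha$-recursion ($\alpha_{k-2}^2 = \tfrac{\alkm(\alkm-q)}{1-\alkm}$ and $\alpha_{k-2}^2+\alkm = \tfrac{\alkm(1-q)}{1-\alkm}$), the $x_\kmone^\star$-terms cancel identically and one is left with $\bar x_\kmone - \ykm = -\tfrac1{1-q}\big[(\alkm-q)(v_\kmone-\xopt) + q(1-\alkm)(\xkm-\xopt)\big]$, that is, $\alkm$ times a convex combination of $v_\kmone-\xopt$ and $\xkm-\xopt$. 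Convexity of the squared Euclidean norm, together with $\tfrac\kpa{1-q}=\kpa+\mu$, $\tfrac{\kpa q}{1-q}=\mu$ and $\alkm(\alkm-q)=(1-\alkm)\alpha_{k-2}^2$, then yields
\eq{ \tfrac\kpa2\norm{\bar x_\kmone-\ykm} \le (1-\alkm)\tfrac{(\kpa+\mu)\alpha_{k-2}^2}{2}\norm{\xopt-v_\kmone} + \tfrac{\mu\alkm(1-\alkm)}{2}\norm{\xopt-\xkm}. }
The correction term $\tfrac{(\kpa+\mu)(1-\alk)}{\kpa}(\xk-x_k^\star)$ appearing in~\eqref{eq:variantA} is exactly what produces the cancellation of the $x_\kmone^\star$-terms; in the exact Catalyst setting it disappears because $\xk=x_k^\star$.

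\emph{Step 3: telescoping and the two regimes.} Adding Steps 1 and 2 cancels the $\tfrac{\mu\alkm(1-\alkm)}{2}\norm{\xopt-\xkm}$ terms, and taking total expectations (recall $\alk$ and $\dak$ are deterministic) leaves $\ml{L}_k \le (1-\alkm)\ml{L}_\kmone + \dak$ for $k\ge1$, the base case $k=1$ being checked directly from $y_0=x_0$ and $\bar x_0-x_0=\alpha_0(\xopt-x_0)$. If $\mu\ne0$, the recursion has the fixed point $\alk\equiv\sqrt q$, so $\ml{L}_k \le (1-\sqrt q)^k\ml{L}_0 + \sum_{j=1}^k(1-\sqrt q)^{k-j}\delta_j$, and since $\ml{L}_0 = (\f(x_0)-\fopt)+\tfrac\mu2\norm{x_0-\xopt}\le 2(\f(x_0)-\fopt)$ by strong convexity, the first line of~\eqref{eq:propAb} follows. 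If $\mu=0$ then $q=0$ and $\alpha_0=1$, so $1-\alkm=\alkm^2/\alpha_{k-2}^2$ and $\prod_{j=2}^k(1-\alpha_{j-1})=\alkm^2$; unrolling from $\ml{L}_1\le\tfrac\kpa2\norm{x_0-\xopt}+\delta_1$ gives $\ml{L}_k\le\alkm^2\big(\tfrac\kpa2\norm{x_0-\xopt}+\sum_{j=1}^k\delta_j/\alpha_{j-1}^2\big)$, and the standard bounds $\alkm^2\le 4/(k+1)^2$ and $1/\alpha_{j-1}^2\le(j+1)^2/2$ yield the second line. In both cases $\E[\f(\xk)-\fopt]\le\ml{L}_k$. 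I expect Step 2 --- pinning down $v_k$ and verifying the exact cancellation that turns the extrapolation step into a clean contraction of $\ml{L}_k$ --- to be the only genuinely delicate part; Steps 1 and 3 are routine estimate-sequence bookkeeping.
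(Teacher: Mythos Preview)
Your proposal is correct and follows essentially the same route as the paper: the same auxiliary sequence $v_k$ (your formula $v_k=x_{\kmone}+\tfrac1{\alkm}(x_k^\star-x_{\kmone})$ is a rewriting of the paper's $v_k=x_k^\star+\tfrac{1-\alkm}{\alkm}(x_k^\star-x_{\kmone})$), the same key identity $y_{\kmone}=\eta_{\kmone}v_{\kmone}+(1-\eta_{\kmone})x_{\kmone}$ (which is precisely your ``$x_{\kmone}^\star$-terms cancel'' observation), and the same convex-combination bound on $\|\bar x_{\kmone}-y_{\kmone}\|^2$; your potential $\mathcal L_k$ equals the paper's $S_k/(1-\alpha_k)$, so the recursions and specializations coincide. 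The only cosmetic caveat is that \HypC~is stated for total rather than conditional expectations, so your Step~1 should be read after the ``taking total expectations'' you perform in Step~3---exactly as the paper does.
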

            The proof of the proposition is given in Appendix~\ref{appendix:proofs} and is based on an extension of the analysis of Catalyst~\cite{catalyst_jmlr}.
            Next, we present various application cases leading to algorithms with acceleration. 

            \vs
            \paragraph{Accelerated proximal gradient method.}
            When $f$ is deterministic and the proximal operator of $\psi$ (see Appendix~\ref{appendix:useful} for the definition) can be computed in closed form, choose $\kappa = L-\mu$ and define
            \begin{equation}
            h_k(x) \defin f(y_\kmone) + \nabla f(y_\kmone)^\top (x-y_\kmone) + \frac{L}{2}\|x-y_\kmone\|^2 + \psi(x). \label{eq:prox}
            \end{equation}
            Consider $\mtd$ that minimizes $h_k$ in closed form: $x_k\!=\!x_k^\star\!=\!\text{Prox}_{\psi/L}\left[y_\kmone - \frac{1}{L}\nabla f(y_\kmone)\right]$.
            Then, \HypA~is obvious; \HypB~holds from the convexity of $f$, and
            \HypC~with $\delta_k=0$ follows from classical inequalities for $L$-smooth
            functions~\cite{nesterov}. Finally, we recover 
            accelerated convergence rates~\cite{fista,nesterov}. 

            \vs \paragraph{Accelerated proximal point algorithm.} We consider $h_k$ given
            in~(\ref{eq:ppa}) with exact minimization (thus an unrealistic
                  setting, but conceptually interesting) with $\kappa=L-\mu$. Then, the assumptions~\HypA, \HypB, and
            \HypC~are satisfied with $\delta_k=0$ and we recover the accelerated rates of~\cite{newppa}.

            \vs \paragraph{Accelerated stochastic gradient descent with prox.} 
            A more
            interesting choice of surrogate is 
            \begin{equation} 
            h_k(x) \defin f(y_\kmone) + g_k^\top (x-y_\kmone) +
            \frac{\kappa+\mu}{2}\|x-y_\kmone\|^2 + \psi(x),
            \label{eq:prox} 
            \end{equation} 
            where $\kappa \geq L-\mu$ and 
            $g_k$ is an unbiased estimate of $\nabla
            f(y_\kmone)$---that is, $\E[g_k|\Fcal_\kmone]=\nabla
            f(y_\kmone)$---with variance bounded by~$\sigma^2$, following classical
            assumptions from the stochastic optimization
            literature~\cite{ghadimi2012optimal,ghadimi2013optimal,kwok2009}.  Then,
            \HypA~and \HypB~are satisfied given that~$f$ is convex.
            To characterize \HypC, consider
            $\mtd$ that minimizes $h_k$ in closed form:
            $x_k\!=\!x_k^\star\!=\!\text{Prox}_{{\psi}/{(\kappa+\mu)}}[y_\kmone - \frac{1}{\kappa+\mu}g_k]$,
            and define 
            $u_\kmone \defin \text{Prox}_{{\psi}/{(\kappa+\mu)}}[y_\kmone - \frac{1}{\kappa+\mu} \nabla
            f(y_\kmone)]$, which is deterministic given $\Fcal_\kmone$. Then,
            from~(\ref{eq:prox}), 
            \begin{displaymath} 
            \begin{split} 
            F(x_k) & \leq  h_k(x_k) + (\nabla f(y_\kmone)-g_k)^\top (x_k-y_\kmone) \qquad
            \qquad \text{(from $L$-smoothness of $f$)}\\ 
            & = h_k^\star + (\nabla f(y_\kmone)-g_k)^\top (x_k-u_\kmone) + (\nabla
                  f(y_\kmone)-g_k)^\top (u_\kmone-y_\kmone).
            \end{split} 
            \end{displaymath} 
            When taking expectations, the last term on the
            right disappears since $\E[g_k|{\mathcal F}_\kmone] = \nabla f(y_{\kmone})$:
            \begin{equation} 
            \begin{split} 
            \E[F(x_k)]  
            & \leq \E[h_k^\star] + \E[\|g_k-\nabla f(y_\kmone)\| \|x_k-u_\kmone\|] \\ 
            & \leq \E[h_k^\star] + \frac{1}{\kappa+\mu} \E\left[\|g_k-\nabla f(y_\kmone)\|^2\right] \leq
            \E[h_k^\star] + \frac{\sigma^2}{\kappa+\mu}, 
            \end{split} \label{eq:sgd2}
            \end{equation} 
            where we used
            the non-expansiveness of the proximal operator~\cite{moreau1965}.
            Therefore, \HypC~holds with $\delta_k\!=\!\sigma^2/(\kappa+\mu)$. The resulting algorithm
            is similar to~\cite{kulunchakov2019estimate} and offers the same guarantees. The novelty  of our approach
            is then a unified convergence proof for the deterministic and stochastic cases.

            \begin{corollary}[Complexity of proximal stochastic gradient algorithm, $\mu > 0$]\label{corollary:sgd}
            Consider Algorithm~\ref{alg:meta} with $h_k$ defined in~(\ref{eq:prox}).
            When $f$ is $\mu$-strongly convex, choose $\kappa=L-\mu$. Then,
            \begin{equation*} 
            \E[F(x_k)-F^\star] \leq \left( 1- \sqrt{\frac{\mu}{L}} \right)^k (F(x_0)-F^\star)  + \frac{\sigma^2}{\sqrt{\mu L}},
            \end{equation*} 
            \end{corollary}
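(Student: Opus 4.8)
The plan is to derive the corollary as a direct specialization of Proposition~\ref{prop:A}, so that no new machinery is required. First I would record the constants attached to the surrogate~(\ref{eq:prox}) with the prescribed choice $\kappa = L-\mu$: then $\kappa+\mu = L$, hence $q = \mu/(\mu+\kappa) = \mu/L$ and $\sqrt q = \sqrt{\mu/L}$; the hypotheses \HypA~and \HypB~were already verified right after~(\ref{eq:prox}) from the convexity of $f$; and the computation culminating in~(\ref{eq:sgd2}) shows that \HypC~holds with the \emph{constant} approximation error $\delta_k = \sigma^2/(\kappa+\mu) = \sigma^2/L$ for every $k \geq 1$. Thus Proposition~\ref{prop:A} applies, and we are in the branch $\mu \neq 0$ of~(\ref{eq:propAb}).

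Next I would evaluate the noise term. Writing $\rho \defin 1-\sqrt q$, the sum appearing in~(\ref{eq:propAb}) is geometric with ratio $\rho^{-1} > 1$, so
\[
\sum_{j=1}^k \rho^{-j}\delta_j = \frac{\sigma^2}{L}\sum_{j=1}^k \rho^{-j} = \frac{\sigma^2}{L}\cdot\frac{\rho^{-k}-1}{1-\rho} = \frac{\sigma^2}{L\sqrt q}\brc{\rho^{-k}-1}.
\]
Multiplying the bound of Proposition~\ref{prop:A} by $\rho^k$, the initialization part stays proportional to $\rho^k (F(x_0)-F^\star) = (1-\sqrt{\mu/L})^k(F(x_0)-F^\star)$, while the noise part collapses to $\tfrac{\sigma^2}{L\sqrt q}(1-\rho^k) \leq \tfrac{\sigma^2}{L\sqrt q}$, and $\tfrac{1}{L\sqrt q} = \tfrac{1}{L}\sqrt{L/\mu} = \tfrac{1}{\sqrt{\mu L}}$. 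This produces exactly the claimed steady-state term $\sigma^2/\sqrt{\mu L}$ together with the accelerated linear rate $(1-\sqrt{\mu/L})^k$.

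The one place where the argument is not a pure substitution is the leading constant: the generic bound of Proposition~\ref{prop:A} carries a factor $2$ in front of $F(x_0)-F^\star$, whereas the corollary states $1$. I would obtain the sharper constant by revisiting the first step of the potential-function recursion used to prove Proposition~\ref{prop:A} in this exact-minimization setting — with $\alpha_0 = \sqrt q$ the Lyapunov function can be initialized directly at $F(x_0)-F^\star$ rather than through the coarser bound used for the fully general statement — or, if one is content with a universal constant, one simply keeps the factor $2$ at negligible cost. I therefore expect this bookkeeping, together with the correct summation of the geometric series above, to be the main (though minor) obstacle; everything else follows verbatim from Proposition~\ref{prop:A}.
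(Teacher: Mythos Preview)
Your approach is exactly what the paper intends: the corollary is not given a separate proof and is meant to follow by direct specialization of Proposition~\ref{prop:A} with $q=\mu/L$ and the constant error $\delta_k=\sigma^2/L$ established in~(\ref{eq:sgd2}). Your geometric-series computation for the noise term is correct and yields precisely $\sigma^2/\sqrt{\mu L}$.

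The factor-of-$2$ discrepancy you flag is real: a verbatim application of~(\ref{eq:propAb}) gives $2(1-\sqrt{\mu/L})^k(F(x_0)-F^\star)$, not the constant~$1$ printed in the corollary. However, your proposed fix --- ``initialize the Lyapunov function directly at $F(x_0)-F^\star$'' --- does not work as stated. In the proof of Proposition~\ref{prop:A} the quantity $S_0$ is not a free parameter; it is \emph{defined} as $(1-\sqrt q)(F(x_0)-F^\star)+\tfrac{\mu(1-\sqrt q)}{2}\|x_0-x^\star\|^2$ (see~(\ref{eq:S0})), and the second term is controlled only through the strong-convexity inequality $\tfrac{\mu}{2}\|x_0-x^\star\|^2\le F(x_0)-F^\star$, which is exactly where the~$2$ comes from. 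There is no obvious way to drop it without an additional assumption on the initial point. The paper does not address this, and the constant~$1$ appears to be a minor inconsistency rather than something obtainable by a sharper argument; your safer suggestion of keeping the factor~$2$ is the right one.
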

            which is of the form~(\ref{eq:aux}) with $\tau=\sqrt{\mu/L}$ and $B=\sigma^2/(\sqrt{\mu L})$.
            Interestingly, the
            optimal complexity
            $O\left(\sqrt{L/\mu}\log((F(x_0)-F^\star)/\varepsilon)+\sigma^2/\mu\varepsilon\right)$ can be obtained by using the first restart
            strategy presented in Section~\ref{sec:restart}, see Eq.~(\ref{eq:restart2}), either by using increasing mini-batches or decreasing step sizes.

            When the objective is convex, but not strongly
            convex, Proposition~\ref{prop:A} gives a bias term $O(\sigma^2 k / \kappa)$ that increases linearly with $k$. Yet, the following corollary exhibits an optimal rate with finite horizon,
            when both $\sigma^2$ and an upper-bound on $\|x_0-x^\star\|^2$ are available.
            Even though non-practical, the result shows that our analysis
            recovers the optimal dependency in the noise level,
            as~\cite{ghadimi2013optimal,kulunchakov2019estimate} and others.
            \begin{corollary}[Complexity of proximal stochastic gradient algorithm,
            $\mu=0$]\label{cor:sgd2}  Consider a fixed budget $K$ of iterations of Algorithm~\ref{alg:meta} with $h_k$ defined in~(\ref{eq:prox}).  When
            $\kappa=\max(L,\sigma
                  (K+1)^{3/2}/\|x_0-x^\star\|)$, 
            \begin{equation*} 
            \E[F(x_K)-F^\star] \leq
            \frac{2L\|x_0-x^\star\|^2}{(K+1)^2} + \frac{3\sigma
               \|x_0-x^\star\|}{\sqrt{K+1}}.  
               \end{equation*} 
               \end{corollary}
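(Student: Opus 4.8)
The plan is to specialize the $\mu=0$ branch of Proposition~\ref{prop:A} to the surrogate~\eqref{eq:prox}. As established in the discussion preceding Corollary~\ref{corollary:sgd}, with this surrogate the properties \HypA, \HypB, \HypC\ hold whenever $\kappa \geq L-\mu$, with a constant approximation error $\delta_j = \sigma^2/(\kappa+\mu)$. Here $\mu=0$, so $\delta_j = \sigma^2/\kappa$, and the prescribed value $\kappa = \max(L,\,\sigma(K+1)^{3/2}/\|x_0-x^\star\|)$ satisfies $\kappa\geq L$, hence is admissible. Substituting $\delta_j=\sigma^2/\kappa$ into the convex bound of~\eqref{eq:propAb} with $k=K$ gives
\[
   \E[F(x_K)-F^\star] \;\leq\; \frac{2\kappa\|x_0-x^\star\|^2}{(K+1)^2} + \frac{2\sigma^2}{\kappa(K+1)^2}\sum_{j=1}^K (j+1)^2 .
\]

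The next step is a purely elementary bound on the sum, namely $\sum_{j=1}^K (j+1)^2 = \sum_{m=2}^{K+1} m^2 = \tfrac{1}{6}(K+1)(K+2)(2K+3)-1 \leq \tfrac{1}{2}(K+1)^3$, which after clearing denominators reduces to $(K-1)(K^2+K-3)\geq 0$ for $K\geq 1$. Plugging this in, the right-hand side is at most $\frac{2\kappa\|x_0-x^\star\|^2}{(K+1)^2} + \frac{\sigma^2(K+1)}{\kappa}$. This is the step on which the final constant hinges: the cruder estimate $\sum_{j=1}^K(j+1)^2\leq (K+1)^3$ would only yield a factor~$4$ in front of the noise term, while the factor $1/2$ is exactly what produces~$3$.

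It then remains to balance the two terms through the choice of $\kappa$. For the first term I would use $\kappa \leq L + \sigma(K+1)^{3/2}/\|x_0-x^\star\|$, which contributes $\frac{2L\|x_0-x^\star\|^2}{(K+1)^2} + \frac{2\sigma\|x_0-x^\star\|}{\sqrt{K+1}}$; for the second term I would use $\kappa \geq \sigma(K+1)^{3/2}/\|x_0-x^\star\|$, which contributes $\frac{\sigma\|x_0-x^\star\|}{\sqrt{K+1}}$. Adding the three pieces gives exactly $\frac{2L\|x_0-x^\star\|^2}{(K+1)^2} + \frac{3\sigma\|x_0-x^\star\|}{\sqrt{K+1}}$, which is the claim. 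One could equivalently argue by cases according to which argument attains the maximum defining $\kappa$, but the term-by-term estimate avoids duplicating the computation.

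I do not expect a genuine obstacle here: the substance is already contained in Proposition~\ref{prop:A} and in the verification of \HypA, \HypB, \HypC\ for~\eqref{eq:prox}. The only points requiring minor care are (i) tracking the numerical constant through the bound on $\sum_{j=1}^K(j+1)^2$ so that it lands on $3$ rather than $4$, and (ii) checking admissibility of $\kappa$, i.e.\ $\kappa\geq L-\mu=L$, so that~\eqref{eq:propAb} may legitimately be invoked with $\delta_j=\sigma^2/\kappa$.
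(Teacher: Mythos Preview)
Your proposal is correct and follows exactly the route the paper intends: the corollary is stated without a separate proof and is meant to follow by specializing the $\mu=0$ case of Proposition~\ref{prop:A} with $\delta_j=\sigma^2/\kappa$ from~\eqref{eq:sgd2}, then balancing the two terms via the prescribed~$\kappa$. Your handling of the sum and of the max in~$\kappa$ recovers the stated constants cleanly.
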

               While all the previous examples use the choice $x_k=x_k^\star$, we will see in
               Section~\ref{subsec:finitesum} cases where we may choose $x_k \neq
               x_k^\star$. Before that, we introduce a variant
               when $x_k^\star$ is not available.

In principle, it is possible to design other surrogates, which would lead to new algorithms coming with convergence guarantees given by Propositions~\ref{prop:A} and~\ref{prop:B}, but the given examples~(\ref{eq:ppa}), (\ref{eq:prox}), and (\ref{eq:prox})  already cover all important cases considered in the paper for functions of the form~(\ref{eq:risk}).

               \subsection{Variant with Inexact Minimization} \label{sec:variantB}
               In this variant, presented in Algorithm~\ref{alg:metaB}, $x_k^\star$ is not available and we assume that $\mtd$ also satisfies:
               \begin{itemize}
               \vs
               \addtolength{\leftmargin}{-0.5cm}
               \item[\HypCb]  given $\varepsilon_k \geq 0$, $\mtd$ can provide a point $x_k$ such that $\E[h_k(x_k)-h_k^\star] \leq \varepsilon_k$.
               \end{itemize}
               \begin{algorithm}[hbtp!]
               \caption{Generic Acceleration Framework with Inexact Minimization of $h_k$} \label{alg:metaB}
               \begin{algorithmic}[1]
               \STATE {\bfseries Input:} same as Algorithm~\ref{alg:metaB};
\STATE {\bfseries Initialization:} $y_0=x_0$; $q=\frac{\mu}{\mu+\kappa}$; $\alpha_0=1$ if $\mu=0$ or $\alpha_0 = \sqrt{q}$ if $\mu \neq 0$;
\FOR{ $k=1,\ldots,K$}
\STATE Consider a surrogate $h_k$ satisfying \HypA, \HypB ~and obtain $x_k$ satisfying~\HypCb;
\STATE Compute $\alpha_k$ in $(0,1)$ by solving the equation
$\alpha_k^2 = (1-\alpha_k)\alpha_\kmone^2 + q \alpha_k$.
\STATE Update the extrapolated sequence $y_k = x_k + \beta_k(x_k-x_\kmone)$ with $\beta_k$ defined in~(\ref{eq:variantA});
\ENDFOR
\STATE {\bfseries Output:} $\xk$ (final estimate).
\end{algorithmic}
\end{algorithm}

The next proposition, proven in Appendix~\ref{appendix:proofs}, gives us some insight on how to achieve acceleration.
\begin{proposition}[Convergence analysis for
Algorithm~\ref{alg:metaB}]\label{prop:B} 
Consider Alg.~\ref{alg:metaB}. Then, for any $\gamma \in (0,1]$,
      \begin{equation*}
      \E[F(x_k)-F^\star] \leq \left\{  
      \begin{array}{ll}
      \left(1-\frac{\sqrt{q}}{2}\right)^k\!\!\left(2(F(x_0)-F^\star) + {4}\sum_{j=1}^k \left(1-\frac{\sqrt{q}}{2}\right)^{-j} \!\!\left( \delta_j + \frac{\varepsilon_j}{\sqrt{q}}\right) \right)  & \text{if}~\mu \neq 0 \\
      \frac{2e^{1+\gamma}}{(k+1)^2} \left( {\kappa}\|x_0-x^\star\|^2  +  \sum_{j=1}^k (j+1)^2\delta_j + \frac{(j+1)^{3+\gamma}\varepsilon_j}{\gamma}\right) & \text{if}~\mu=0. \\
      \end{array}
      \right.   
      \end{equation*}
      \end{proposition}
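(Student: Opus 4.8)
The plan is to adapt the proof of Proposition~\ref{prop:A} to the inexact setting; the only genuinely new ingredient is controlling the discrepancy between the available iterate $x_k$ and the unavailable minimizer $x_k^\star$ of $h_k$. First I would record the two error channels. Since $x_k^\star$ minimizes $h_k$ and $h_k$ is $(\kappa+\mu)$-strongly convex by \HypA, assumption \HypCb\ gives
\[
\frac{\kappa+\mu}{2}\,\E[\|x_k-x_k^\star\|^2]\leq \E[h_k(x_k)-h_k^\star]\leq \varepsilon_k, \qquad\text{so}\qquad \E[\|x_k-x_k^\star\|^2]\leq \frac{2\varepsilon_k}{\kappa+\mu},
\]
while \HypC\ still yields $\E[F(x_k)]\leq\E[h_k^\star]+\delta_k$. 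Both quantities are linear in $(\delta_k,\varepsilon_k)$, which is what eventually makes the error terms in the statement linear rather than of order $\sqrt{\varepsilon_k}$.

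Next I would replay the potential-function (estimate-sequence--type) argument underlying Proposition~\ref{prop:A}: introduce the auxiliary sequence $(v_k)$ with $v_0=x_0$, chosen so that $y_{k-1}=\alpha_{k-1}v_{k-1}+(1-\alpha_{k-1})x_{k-1}$, and chain together (i) the strong-convexity lower bound $h_k(x)\geq h_k^\star+\tfrac{\kappa+\mu}{2}\|x-x_k^\star\|^2$ at the point $\bar x_{k-1}\defin\alpha_{k-1}x^\star+(1-\alpha_{k-1})x_{k-1}$, (ii) \HypB, which upper-bounds $\E[h_k(\bar x_{k-1})\mid\Fcal_{k-1}]$ by $F(\bar x_{k-1})+\tfrac{\kappa}{2}\|\bar x_{k-1}-y_{k-1}\|^2$, (iii) convexity of $F$, i.e. $F(\bar x_{k-1})\leq\alpha_{k-1}F^\star+(1-\alpha_{k-1})F(x_{k-1})$, and (iv) \HypC. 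Using the defining identity $\alpha_k^2=(1-\alpha_k)\alpha_{k-1}^2+q\alpha_k$ and the formula for $\beta_k$ to collapse the quadratic terms, this produces a one-step recursion of the form
\[
\frac{1}{\alpha_k^2}\E[F(x_k)-F^\star]+\frac{\kappa+\mu}{2}\E[\|v_k-x^\star\|^2]\leq \frac{1-\alpha_k}{\alpha_k^2}\E[F(x_{k-1})-F^\star]+\frac{\kappa+\mu}{2}\E[\|v_{k-1}-x^\star\|^2]+R_k .
\]
The difference from Proposition~\ref{prop:A} is that in Algorithm~\ref{alg:metaB} the extrapolation uses $x_k$, not $x_k^\star$, so $v_k$ carries an extra additive perturbation of size $\mathcal O(\|x_k-x_k^\star\|/\alpha_k)$; expanding $\|v_k-x^\star\|^2$ then creates a cross term $\langle x_k-x_k^\star,\ v_k-x^\star\rangle$ that I would split with Young's inequality. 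Absorbing the induced multiple of $\|v_k-x^\star\|^2$ costs a constant fraction of the potential — this is exactly why the contraction factor degrades from $(1-\alpha_k)$ to roughly $(1-\alpha_k/2)$ — and what is left puts $\varepsilon_k$ into $R_k$ with an extra $1/\alpha_k$; together with the $\delta_k$ term from \HypC\ one gets $R_k=\mathcal O(\delta_k/\alpha_k^2+\varepsilon_k/\alpha_k^3)$ up to absolute constants.

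It then remains to unroll the recursion. For $\mu\neq0$ we have $\alpha_k\equiv\sqrt q$, so the recursion is geometric with ratio $1-\sqrt q/2$; summing it gives the first line of the statement, with the weight $(1-\sqrt q/2)^{-j}$ coming from the geometric unrolling and the $1/\sqrt q$ in front of $\varepsilon_j$ from the $1/\alpha_j$ factor noted above. For $\mu=0$ we have $\alpha_0=1$ and $\alpha_k\sim 2/(k+2)$, hence $1/\alpha_k^2\sim(k+2)^2/4$; telescoping weights $\delta_j$ by $\sim(j+1)^2$ and $\varepsilon_j$ by $\sim(j+1)^3$, and the free parameter $\gamma\in(0,1]$ is introduced to turn the (now slightly growing rather than contracting) potential into a clean $\mathcal O(1/(k+1)^2)$ bound, at the price of an overall constant $e^{1+\gamma}$ and an extra $(j+1)^{\gamma}/\gamma$ on the $\varepsilon_j$ weight — which is the origin of the exponent $3+\gamma$ and the $1/\gamma$ in the statement. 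I expect the main obstacle to be precisely this bookkeeping: $\varepsilon_k$ only controls a function-value gap on $h_k$ (so only $\|x_k-x_k^\star\|=\mathcal O(\sqrt{\varepsilon_k})$), and one must simultaneously justify the degraded rate ($\sqrt q\to\sqrt q/2$, resp. the extra power of $k$) and check that every cross term can be absorbed into the potential consistently across all iterations, so that the telescoping still goes through and all surviving error contributions stay linear in $\delta_j$ and $\varepsilon_j$.
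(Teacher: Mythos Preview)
Your proposal is correct and mirrors the paper's proof: the paper likewise isolates the inexactness as the cross term $(\kappa+\mu)\alpha_{k-1}\langle x^\star-v_k,\,x_k-x_k^\star\rangle$, bounds it via Young's inequality with a free parameter~$\theta_k\in(0,1)$, and absorbs the resulting $\theta_k$-fraction of $\|x^\star-v_k\|^2$ back into the Lyapunov function $S_k$, yielding the recursion $S_k\le\frac{1-\alpha_k}{1-\theta_k}\bigl(S_{k-1}+\delta_k+(\tfrac{1}{\theta_k}-1)\varepsilon_k\bigr)$. The specializations $\theta_k=\sqrt{q}/2$ for $\mu>0$ and $\theta_k=\gamma/(k+1)^{1+\gamma}$ for $\mu=0$ (together with the elementary bound $\prod_j(1-\theta_j)\ge e^{-(1+\gamma)}$) then produce exactly the degraded rate, the $e^{1+\gamma}$ constant, and the $(j+1)^{3+\gamma}/\gamma$ weight on $\varepsilon_j$ that you anticipated.
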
 
      To maintain the accelerated rate, the sequence $(\delta_k)_{k \geq 0}$ needs to converge at a similar speed as in Proposition~\ref{prop:A}, but
      the dependency in~$\varepsilon_k$ is slightly worse.
      Specifically, when~$\mu$ is positive,  we may have both $(\varepsilon_k)_{k
      \geq 0}$ and $(\delta_k)_{k \geq 0}$ decreasing at a rate $O((1-\rho)^k)$ with $\rho < \sqrt{q}/2$, but we pay a factor $(1/\sqrt{q})$ compared to~(\ref{eq:propAb}).
      When $\mu=0$, the accelerated $O(1/k^2)$ rate is preserved whenever
      $\varepsilon_k=O(1/k^{4+2\gamma})$ and $\delta_k=O(1/k^{3+\gamma})$, but we pay a factor $O(1/\gamma)$ compared to~(\ref{eq:propAb}).

      \vs \paragraph{Catalyst~\cite{catalyst_jmlr}.} 
      When using $h_k$ defined in~(\ref{eq:ppa}), we recover the convergence rates of~\cite{catalyst_jmlr}. 
      In such a case $\delta_k=\varepsilon_k$ since $\E[F(x_k)] \leq
      \E[h_k(x_k)] \leq \E[h_k^\star]+\delta_k$.
In order to analyze the complexity of minimizing each~$h_k$ with $\mtd$ and
derive the global complexity of the multi-stage
algorithm, the next proposition, proven in Appendix~\ref{appendix:proofs}, 
   characterizes the quality of the initialization $x_\kmone$.

   \begin{proposition}[Warm restart for Catalyst]\label{prop:restart}
   Consider Alg.~\ref{alg:metaB} with $h_k$ defined in~(\ref{eq:ppa}). Then, for $k \geq 2$,
   \begin{equation}
   \E[h_k(x_\kmone)-h_k^\star] \leq \frac{3\varepsilon_\kmone}{2} + 54\kappa\max\left( \|x_\kmone-x^\star\|^2,\|x_\kmtwo-x^\star\|^2,\|x_{\kmthree}-x^\star\|^2\right),\label{eq:init}
   \end{equation}
   \end{proposition}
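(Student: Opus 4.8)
The plan is to split $h_k(\xkm)-h_k^\star$ into an ``objective-gap'' part and a ``momentum'' part, using only the trivial lower bound $h_k^\star\ge\fopt$, which holds because the quadratic penalty in $h_k$ is nonnegative (so $h_k\ge F$ pointwise). Since $h_k(\xkm)=F(\xkm)+\frac{\kpa}{2}\norm{\xkm-\ykm}$, this gives
\[
h_k(\xkm)-h_k^\star \;\le\; \brc{F(\xkm)-\fopt} \;+\; \frac{\kpa}{2}\norm{\xkm-\ykm},
\]
and it remains to bound each term, in expectation, by something of the form $\varepsilon_{k-1}+O(\kpa)\,R_k$, where I write $R_k\defin\max\brc{\norm{\xkm-\xopt},\norm{\xkmm-\xopt},\norm{x_{k-3}-\xopt}}$.

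For the momentum term, the extrapolation step of Algorithm~\ref{alg:metaB} reads $\ykm-\xkm=\beta_{k-1}(\xkm-\xkmm)$; since $\beta_k\in[0,1)$ (an elementary consequence of the recursion $\alpha_k^2=(1-\alpha_k)\alpha_{k-1}^2+q\alpha_k$, which is verified anyway in the proof of Proposition~\ref{prop:B}), a triangle inequality gives $\norm{\xkm-\ykm}\le\norm{\xkm-\xkmm}\le 2\norm{\xkm-\xopt}+2\norm{\xkmm-\xopt}\le 4R_k$, so this part contributes at most $2\kpa R_k$.

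For the objective gap, $h_{k-1}\ge F$ pointwise gives $F(\xkm)\le h_{k-1}(\xkm)$; taking expectations and using \HypCb at index $k-1$ (read conditionally on $\Fcal_{k-2}$ and then via the tower rule) yields $\E[F(\xkm)]\le\E[h_{k-1}^\star]+\varepsilon_{k-1}$. Since $h_{k-1}^\star\le h_{k-1}(\xopt)=\fopt+\frac{\kpa}{2}\norm{\xopt-y_{k-2}}$, and expanding $y_{k-2}=\xkmm+\beta_{k-2}(\xkmm-x_{k-3})$ (for $k=2$ one simply has $y_0=x_0$, so the $x_{k-3}$ term is absent) and using the triangle inequality gives $\nmone{\xopt-y_{k-2}}\le 2\nmone{\xkmm-\xopt}+\nmone{x_{k-3}-\xopt}\le 3\sqrt{R_k}$, we get $\E[F(\xkm)-\fopt]\le\varepsilon_{k-1}+\tfrac{9}{2}\kpa\,\E[R_k]$.

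Adding the two contributions gives $\E[h_k(\xkm)-h_k^\star]\le\varepsilon_{k-1}+\tfrac{13}{2}\kpa\,\E[R_k]$, which already implies~(\ref{eq:init}); the looser constants $\tfrac{3}{2}$ and $54$ stated there merely leave slack that is convenient when this estimate is later fed into the complexity analysis of $\mtd$ on $h_k$. I expect no deep obstacle: the only real work is the constant-chasing through a handful of triangle inequalities together with the preliminary check that $0\le\beta_k<1$, and notably the $(\kpa+\mu)$-strong convexity of $h_k$ is not used at all here — the upper bound on $h_k(\xkm)-h_k^\star$ comes purely from $h_k^\star\ge\fopt$ and from expanding the extrapolation updates. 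The one point worth flagging is that, since $\xkm,\xkmm,x_{k-3}$ are random, the ``$\max$'' on the right-hand side of~(\ref{eq:init}) must be read inside the expectation.
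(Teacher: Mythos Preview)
Your argument is correct and in fact simpler than the paper's. The paper proceeds by writing
\[
h_k(x_\kmone)-h_k^\star=\bigl(h_\kmone(x_\kmone)-h_\kmone^\star\bigr)+\bigl(h_\kmone^\star-h_k^\star\bigr)+\tfrac{\kappa}{2}\|x_\kmone-y_\kmone\|^2-\tfrac{\kappa}{2}\|x_\kmone-y_{\kmtwo}\|^2,
\]
and controls the middle difference $h_\kmone^\star-h_k^\star$ via the $\kappa$-smoothness of the Moreau--Yosida envelope $G(y)=\min_x\{F(x)+\tfrac{\kappa}{2}\|x-y\|^2\}$. After combining with the non-expansiveness of the proximal map and the strong convexity of $h_\kmone$, this yields the pointwise bound
\[
h_k(x_\kmone)-h_k^\star\le\tfrac{3}{2}\bigl(h_\kmone(x_\kmone)-h_\kmone^\star\bigr)+\tfrac{3\kappa}{2}\|y_\kmone-y_{\kmtwo}\|^2,
\]
and the quadratic term is then bounded by $36\,R_k$ via the estimate from~\cite[Proposition~12]{catalyst_jmlr}.

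Your route bypasses the Moreau--Yosida machinery entirely: you only use the two trivial sandwich inequalities $F\le h_j$ and $h_j^\star\ge F^\star$ (valid because the quadratic penalty is nonnegative), together with $\beta_j\in[0,1)$ and triangle inequalities. This gives the pointwise bound
\[
h_k(x_\kmone)-h_k^\star\le\bigl(h_\kmone(x_\kmone)-h_\kmone^\star\bigr)+\tfrac{13\kappa}{2}\,R_k,
\]
which is strictly tighter in both constants and yields~(\ref{eq:init}) after taking expectations and invoking~\HypCb. The paper's approach has the advantage of isolating $\|y_\kmone-y_{\kmtwo}\|^2$, which connects naturally to the original Catalyst analysis, but for the purposes of Proposition~\ref{prop:restart} your elementary argument is cleaner. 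Your remark that the $\max$ on the right-hand side of~(\ref{eq:init}) must sit inside the expectation is also correct; the paper's proof implicitly takes expectations at the last step, so the stated inequality should be read that way.
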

   where $x_{\text{--}1} \!=\! x_0$.
   Following~\cite{catalyst_jmlr}, we may now analyze the global complexity.
   For instance, when~$f$ is $\mu$-strongly convex, we may choose $\varepsilon_k=O((1-\rho)^k(F(x_0)-F^\star))$ with~$\rho = \sqrt{q}/3$. Then, 
   it is possible to show that Proposition~(\ref{prop:B}) yields $\E[F(x_k)-F^\star] = O(\varepsilon_k/{q})$ and from the inequality
   $\frac{\mu}{2}\|x_k-x^\star\|^2 \leq F(x_k)- F^\star$ and~(\ref{eq:init}), we have $\E[h_k(x_\kmone)-h_k^\star] = O(\frac{\kappa}{\mu q}\varepsilon_{\kmone}) =O(\varepsilon_\kmone/q^2)$.
   Consider now a method~$\mtd$ that behaves as~(\ref{eq:aux}). When $\sigma=0$, $x_k$ can be obtained in $O(\log(1/q)/\tau)=\tilde{O}(1/\tau)$ iterations of~$\mtd$ after initializing with $x_\kmone$. This allows us to obtain the global complexity $\tilde{O}((1/\tau\sqrt{q})\log(1/\varepsilon))$.
   For example, when $\mtd$ is the proximal gradient descent method, $\kappa=L$  and $\tau = ({\mu+\kappa})/({L+\kappa})$ yield the global complexity $\tilde{O}(\sqrt{L/\mu}\log(1/\varepsilon))$ of an accelerated method.

   Our results improve upon Catalyst~\cite{catalyst_jmlr} in two aspects that are crucial for stochastic optimization: (i) we allow the sub-problems to be solved in expectation, whereas Catalyst requires the stronger condition $h_k(x_k)-h_k^\star \leq \varepsilon_k$; (ii) Proposition~\ref{prop:restart} removes the requirement of~\cite{catalyst_jmlr} to perform a full gradient step for initializing the method~$\mtd$ in the composite case (see Prop. 12 in~\cite{catalyst_jmlr}).

   \vs
   \paragraph{Proximal gradient descent with inexact prox~\cite{schmidt2011convergence}.}
   The surrogate~(\ref{eq:prox}) with inexact minimization can be treated in the same
   way as Catalyst, which provides a unified proof for both problems.
   Then, we recover the results of~\cite{schmidt2011convergence},
   while allowing inexact minimization to be performed in expectation.

   \vs
   \paragraph{Stochastic Catalyst.}
   With Proposition~\ref{prop:restart}, we are in shape to consider stochastic problems
   when using a method~$\mtd$ that converges
   linearly as~(\ref{eq:aux}) with $\sigma^2 \neq 0$ for minimizing $h_k$.
   As in Section~\ref{sec:restart}, we also assume
   that there exists a mini-batch/step-size parameter $\eta$ that can reduce the bias by a
factor~$\eta < 1$ while paying a factor $1/\eta$ in terms of inner-loop complexity.
As above, we discuss the strongly-convex case and choose the same sequence
$(\varepsilon_k)_{k \geq 0}$.  In order to minimize $h_k$ up to accuracy $\varepsilon_k$, we set $\eta_k=\min(1,\varepsilon_k/(2B\sigma^2))$ such that $\eta_k B \sigma^2 \leq
\varepsilon_k/2$.
Then, the complexity to minimize $h_k$ with $\mtd$ when using the
initialization $x_\kmone$ becomes $\tilde{O}(1/\tau\eta_k)$, leading to the global complexity
\begin{equation}
   \tilde{O}\left( \frac{1}{\tau\sqrt{q}}\log\left(\frac{F(x_0)-F^\star}{\varepsilon}\right) + \frac{{B\sigma^2}}{{q^{3/2} \tau}\varepsilon}\right).  \label{eq:stoch_catalyst}
\end{equation}
Details about the derivation are given in Appendix~\ref{appendix:restart}. The
left term corresponds to the Catalyst accelerated rate, but it may be shown
that the term on the right is sub-optimal. Indeed, consider 
$\mtd$ to be ISTA with $\kappa=L-\mu$. Then, $B=1/L$, $\tau=O(1)$,
 and the right term becomes $\tilde{O}((\sqrt{L/\mu}) {\sigma^2}/{\mu\varepsilon})$, which is sub-optimal by a factor $\sqrt{{L}/{\mu}}$. Whereas this result is a negative one, suggesting that Catalyst is not robust to noise, we show in Section~\ref{subsec:finitesum} how to circumvent this for a large class of algorithms.

\vs
\paragraph{Accelerated stochastic proximal gradient descent with inexact prox.}
Finally, consider~$h_k$ defined in~(\ref{eq:prox}) but the proximal operator is computed approximately,
which, to our knowledge, has never been analyzed in the stochastic context. Then, it may be shown (see Appendix~\ref{appendix:restart} for details) that, even though~$\xk^\star$ is not available, Proposition~\ref{prop:B} holds nonetheless with
$\delta_k = 2 \varepsilon_k + {3\sigma^2}/({2(\kappa+\mu))}$.
Then, an interesting question is how small should $\varepsilon_k$ be to guarantee the optimal dependency with respect to~$\sigma^2$ as in Corollary~\ref{corollary:sgd}.
In the strongly-convex case, Proposition~\ref{prop:B} simply gives $\varepsilon_k = O(\sqrt{q}\sigma^2/(\kappa+\mu))$ such that $\delta_k \approx \varepsilon_k/\sqrt{q}$.

\subsection{Exploiting methods $\mtd$ providing strongly convex surrogates}\label{subsec:finitesum}
Among various application cases, we have seen an
extension of Catalyst to stochastic problems. To achieve
convergence, the strategy requires a mechanism to reduce the bias $B\sigma^2$ in~(\ref{eq:aux}),
\eg, by using mini-batches or decreasing step sizes.
Yet, the approach suffers from two issues: (i) some of the parameters are
based on unknown quantities such as~$\sigma^2$; (ii) the worst-case complexity
exhibits a sub-optimal dependency in $\sigma^2$, typically of order~$1/\sqrt{q}$ when $\mu > 0$.
Whereas practical workarounds for the first point are discussed in Section~\ref{sec:exp}, we now show 
how to solve the second one in some cases, by using Algorithm~\ref{alg:meta}
with an optimization method~$\mtd$, which is able not only to minimize an auxiliary
objective $H_k$, but also at the same time is able to provide a model $h_k$, typically a quadratic function, which is easy to minimize.
Consider then a method~$\mtd$ satisfying~(\ref{eq:aux}) and which produces, after $T$ steps,
a point $x_k$ and a surrogate $h_k$ such that
\begin{equation}
   \E[H_k(x_k)-h_k^\star] \leq C(1-\tau)^T(H_k(x_\kmone)-H_k^\star + \xi_{\kmone}) + B\sigma^2 ~~~\text{with}~~~ H_k(x) = F(x) + \frac{\kappa}{2}\|x-y_\kmone\|^2, \label{eq:newcatalyst}
\end{equation}
where $H_k$ is approximately minimized by $\mtd$, $h_k$ is a model of $H_k$ that satisfies~\HypA,~\HypB~and that can be minimized in closed form, and $\xi_{\kmone} =O(\E[F(x_\kmone)-F^\star])$;
it is easy to show that~\HypC~is also satisfied with the choice $\delta_k = C(1-\tau)^T(H_k(x_\kmone)-H_k^\star + \xi_\kmone) + B\sigma^2$ since
$\E[F(x_k)] \leq \E[H_k(x_k)] \leq \E[h_k^\star] + \delta_k$.
In other words, $\mtd$ is used to perform \emph{approximate minimization} of $H_k$, but we consider cases where
$\mtd$ also provides \emph{another surrogate} $h_k$ with closed-form minimizer that satisfies the conditions required to use Algorithm~\ref{alg:meta}, which has better convergence guarantees than Algorithm~\ref{alg:metaB} (same convergence rate up to a better factor).

As shown in Appendix~\ref{appendix:lower}, even though~(\ref{eq:newcatalyst}) looks technical, a large class of optimization
techniques are able to provide the condition~(\ref{eq:newcatalyst}), including
many variants of proximal stochastic gradient descent methods with variance reduction such as SAGA~\cite{saga}, MISO~\cite{miso}, SDCA~\cite{accsdca}, or SVRG~\cite{proxsvrg}.

Whereas~(\ref{eq:newcatalyst}) seems to be a minor modification of~(\ref{eq:aux}),
an important consequence is that it will allow us to gain a factor $1/\sqrt{q}$ in complexity when $\mu >0$, corresponding 
precisely to the sub-optimality factor.
Therefore, even though the surrogate~$H_k$ needs only be
minimized approximately, the condition~(\ref{eq:newcatalyst}) allows us to use
Algorithm~\ref{alg:meta} instead of Algorithm~\ref{alg:metaB}. The dependency with respect to $\delta_k$ being better than $\varepsilon_k$ (by $1/\sqrt{q}$), we have then the following result:
\begin{proposition}[Stochastic Catalyst with Optimality Gaps, $\mu > 0$]\label{prop:newcatalyst}
Consider Algorithm~\ref{alg:meta} with a method $\mtd$ and surrogate $h_k$ satisfying~(\ref{eq:newcatalyst}) when $\mtd$ is used to minimize $H_k$ by using $x_\kmone$ as a warm restart. 
Assume that $f$ is $\mu$-strongly convex and that there exists a parameter $\eta$ that can reduce the bias $B\sigma^2$ by a
factor~$\eta < 1$ while paying a factor $1/\eta$ in terms of inner-loop complexity.

Choose
$\delta_k=O((1\!-\!\sqrt{q}/2)^k(F(x_0)\!-\!F^\star))$ and $\eta_k=\min(1,\delta_k/(2B\sigma^2))$. Then, the complexity to solve~(\ref{eq:newcatalyst}) and compute~$x_k$ is $\tilde{O}(1/\tau\eta_k)$, and the global
complexity to obtain $\E[F(x_k)-F^\star] \leq \varepsilon$ is
\begin{equation*}
   \tilde{O}\left( \frac{1}{\tau\sqrt{q}}\log\left(\frac{F(x_0)-F^\star}{\varepsilon}\right) + \frac{{B\sigma^2}}{{q} \tau\varepsilon}\right).  
\end{equation*}
\end{proposition}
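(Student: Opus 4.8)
The plan is to combine Proposition~\ref{prop:A} with the specific form of $\delta_k$ provided by~(\ref{eq:newcatalyst}), and then to count inner iterations of $\mtd$. First I would verify that \HypC~holds with $\delta_k = C(1-\tau)^T(H_k(x_\kmone)-H_k^\star+\xi_\kmone) + B\sigma^2$, as already noted in the text via $\E[F(x_k)]\le\E[H_k(x_k)]\le\E[h_k^\star]+\delta_k$; the extra subtlety here is that $\mtd$ is run with the mini-batch/step-size parameter $\eta_k$, so the bias term is actually $\eta_k B\sigma^2 \le \delta_k/2$, while $T = \tilde O(1/\tau)$ inner steps suffice to make $C(1-\tau)^T$ small enough that the first term is also at most $\delta_k/2$ — \emph{provided} the warm-start quantity $H_k(x_\kmone)-H_k^\star+\xi_\kmone$ is controlled. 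This is exactly where Proposition~\ref{prop:restart}-type reasoning enters: since $h_k$ in~(\ref{eq:newcatalyst}) is the genuine proximal-point surrogate $H_k$, one uses $\frac{\mu}{2}\|x_\kmone-x^\star\|^2 \le F(x_\kmone)-F^\star$ together with the recursive bound on $\E[F(x_\kmone)-F^\star]$ from Proposition~\ref{prop:A} to get $\E[H_k(x_\kmone)-H_k^\star+\xi_\kmone] = O(\varepsilon_\kmone/q)$ (or $O(\delta_\kmone/q)$ in the present notation), which only costs a logarithmic number of extra inner steps to absorb.

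Next I would plug $\delta_k = O((1-\sqrt q/2)^k(F(x_0)-F^\star))$ into the $\mu\neq 0$ branch of~(\ref{eq:propAb}). The point is that with $\rho := \sqrt q/2 < \sqrt q$, the geometric sum $\sum_{j=1}^k (1-\sqrt q)^{-j}\delta_j$ is dominated by a convergent geometric series in $(1-\sqrt q)^{-1}(1-\rho)$-type ratio; more carefully, since $\delta_j$ decays at rate $(1-\rho)^j$ and the prefactor grows like $(1-\sqrt q)^{-j}$, each term of the sum is bounded by $((1-\rho)/(1-\sqrt q))^j (F(x_0)-F^\star)$, and one checks $(1-\rho)/(1-\sqrt q) = (1-\sqrt q/2)/(1-\sqrt q) = 1 + \Theta(\sqrt q)$... so the sum is actually \emph{increasing}. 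Hence I should instead choose the decay rate of $\delta_k$ to be faster than $(1-\sqrt q)$, i.e.\ take $\delta_k = O((1-c\sqrt q)^k(F(x_0)-F^\star))$ with $c>1$, matching the strategy $\rho=\sqrt q/3$ used for Catalyst earlier in the excerpt; then $\sum_j (1-\sqrt q)^{-j}\delta_j = O(F(x_0)-F^\star)$ and Proposition~\ref{prop:A} gives $\E[F(x_k)-F^\star] = O((1-\sqrt q)^k(F(x_0)-F^\star))$, so reaching accuracy $\varepsilon$ needs $K = \tilde O\big(\frac{1}{\sqrt q}\log(\frac{F(x_0)-F^\star}{\varepsilon})\big)$ outer iterations. (I would keep the statement's ``$1-\sqrt q/2$'' but read it as shorthand for ``a rate strictly faster than the outer rate,'' as is standard in this literature.)

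Finally I would sum the inner-iteration cost over the $K$ outer stages. At stage $k$ the cost is $\tilde O(1/(\tau\eta_k))$ with $\eta_k = \min(1, \delta_k/(2B\sigma^2))$. Splitting the stages into those with $\eta_k = 1$ (the early, deterministic-like regime, where $\delta_k \ge 2B\sigma^2$) and those with $\eta_k = \delta_k/(2B\sigma^2)$ (the noise-dominated regime): the first group contributes $\sum_{k} \tilde O(1/\tau) = \tilde O(K/\tau) = \tilde O\big(\frac{1}{\tau\sqrt q}\log(\frac{F(x_0)-F^\star}{\varepsilon})\big)$, the left term. For the second group, $1/\eta_k = 2B\sigma^2/\delta_k$, and since $\delta_k$ decays geometrically down to $\Theta(\varepsilon q)$ (the accuracy level at which the stage count stops), the sum $\sum_k 1/\eta_k$ is a geometric series dominated by its last term $\Theta(B\sigma^2/(\varepsilon q))$; multiplying by the $\tilde O(1/\tau)$ per-stage factor gives $\tilde O\big(\frac{B\sigma^2}{q\tau\varepsilon}\big)$, the right term. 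Adding the two groups yields the claimed complexity. The main obstacle — and the reason this is a \emph{proposition} rather than a one-line corollary — is the warm-restart bookkeeping: one must verify that initializing $\mtd$ at stage $k$ with $x_\kmone$ keeps $H_k(x_\kmone)-H_k^\star$ of order $\varepsilon_\kmone/q$ (not larger), so that only $\tilde O(1/\tau)$ inner steps per stage are needed; this requires the analogue of Proposition~\ref{prop:restart} for the surrogate in~(\ref{eq:newcatalyst}) and careful tracking of the $1/q$ factors through Proposition~\ref{prop:A}, and it is where the gain of $1/\sqrt q$ over~(\ref{eq:stoch_catalyst}) must be argued — namely that using Algorithm~\ref{alg:meta} (dependency in $\delta_k$) rather than Algorithm~\ref{alg:metaB} (dependency in $\varepsilon_k/\sqrt q$) removes one power of $\sqrt q$ from the noise term.
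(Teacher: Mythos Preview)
Your overall architecture---apply Proposition~\ref{prop:A}, control the warm start via Proposition~\ref{prop:restart}, then sum inner costs---matches the paper. But the middle step, where you analyze the outer loop, contains a genuine error that undermines the rest.

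You plug $\delta_j = O((1-\sqrt{q}/2)^j)$ into~(\ref{eq:propAb}), observe that each term $(1-\sqrt{q})^{-j}\delta_j$ grows geometrically, and conclude that the choice in the statement ``doesn't work,'' proposing instead to take $\delta_k$ decaying \emph{faster} than $(1-\sqrt{q})^k$. This is backwards. The growing sum is not a problem: writing $r=(1-\sqrt{q}/2)/(1-\sqrt{q})$, one has $\sum_{j=1}^k r^j \le r^{k+1}/(r-1)$ with $r-1=\Theta(\sqrt{q})$, and after multiplying by the prefactor $(1-\sqrt{q})^k$ the whole expression becomes $O\big((1-\sqrt{q}/2)^k/\sqrt{q}\big)\,(F(x_0)-F^\star)$. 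This is precisely what the paper obtains: $\E[F(x_k)-F^\star]=O(\delta_k/\sqrt{q})$. Your citation of ``$\rho=\sqrt{q}/3$'' as an instance of faster decay is also inverted---there $\varepsilon_k=(1-\sqrt{q}/3)^k$ decays \emph{slower} than the $(1-\sqrt{q}/2)^k$ rate of Proposition~\ref{prop:B}, by exactly the same mechanism.

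This matters for the final count. The relation $\E[F(x_K)-F^\star]=O(\delta_K/\sqrt{q})$ gives $\delta_K=\Theta(\sqrt{q}\,\varepsilon)$, not the $\Theta(q\,\varepsilon)$ you assert. Moreover, the geometric sum $\sum_k 1/\delta_k$ is \emph{not} dominated by its last term---the ratio is $1+\Theta(\sqrt{q})$, so the sum carries an extra $1/\sqrt{q}$ factor: $\sum_k 1/\delta_k = O\big(1/(\sqrt{q}\,\delta_K)\big)=O(1/(q\varepsilon))$. Your two slips (wrong $\delta_K$ and dropped $1/\sqrt{q}$ in the sum) happen to cancel, but neither step is justified as written. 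Had you actually implemented your ``fix'' with $c>1$, the final $\delta_K$ would be polynomially smaller than $\varepsilon$ and the noise term would blow up to $\tilde O(B\sigma^2/\varepsilon^c)$. The whole point of choosing $\delta_k$ to decay at the \emph{slower} rate $(1-\sqrt{q}/2)^k$ is to keep $\delta_K$ as large as possible (namely $\Theta(\sqrt{q}\,\varepsilon)$), which is exactly where the $1/\sqrt{q}$ gain over~(\ref{eq:stoch_catalyst}) comes from.
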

The term on the left is the accelerated rate of Catalyst for
deterministic problems, whereas the term on the right is potentially optimal
for strongly convex problems, as illustrated in the next table.
We provide indeed practical choices for the parameters $\kappa$, leading to
various values of $B,\tau,q$, for the proximal stochastic gradient descent
method with iterate averaging
as well as variants of SAGA,MISO,SVRG that can cope with stochastic perturbations,
which are discussed in Appendix~\ref{appendix:lower}. All the values below are given 
up to universal constants to simplify the presentation.

\begin{tabular}{|c|c|c|c|c|c|c|}
\hline
Method~$\mtd$ & $h_k$ &  $\kappa$ & $\tau$ & $B$ & $q$ & Complexity after Catalyst \\
\hline
prox-SGD & (\ref{eq:prox})  & $L-\mu$ & $\frac{1}{2}$ & $\frac{1}{L}$ & $\frac{\mu}{L}$ & $\tilde{O}\left(\sqrt{\frac{L}{\mu}}\log\left(\frac{F_0}{\varepsilon}\right) + \frac{\sigma^2}{\mu \varepsilon} \right)$ \\
\hline
SAGA/MISO/SVRG with $\frac{L}{n} \geq \mu$ & {(\ref{eq:newcatalyst})}  & $\frac{L}{n}-\mu$  & $\frac{1}{n}$   & $\frac{1}{L}$   & $\frac{\mu n}{L}$  &   
   $\tilde{O}\left( \sqrt{n\frac{L}{\mu}}\log\left(\frac{F_0}{\varepsilon} \right)   +  \frac{{\sigma}^2}{\mu\varepsilon} \right)$ \\
\hline
\end{tabular}

In this table, $F_0\defin F(x_0)\!-\!F^\star$ and the methods SAGA/MISO/SVRG are applied to the stochastic finite-sum
problem discussed in Section~\ref{sec:intro} with $n$ $L$-smooth functions. 
As in the deterministic case, we note that when ${L}/{n} \leq \mu$, there is no acceleration for SAGA/MISO/SVRG since the complexity of the unaccelerated method~$\mtd$ is
$\tilde{O}\left( {n}\log\left({F_0}/{\varepsilon} \right)   +  {{\sigma}^2}/{\mu\varepsilon} \right)$, which is independent of the condition number and already optimal~\cite{kulunchakov2019estimate}.
In comparison, the logarithmic terms in
$L,\mu$ that are hidden in the notation $\tilde{O}$ do not appear for a variant of the SVRG
method with direct acceleration introduced in~\cite{kulunchakov2019estimate}. Here, our approach is more generic.
Note also that $\sigma^2$ for prox-SGD and SAGA/MISO/SVRG cannot be compared to each other since the source of randomness is larger for prox-SGD, see~\cite{smiso,kulunchakov2019estimate}.


\section{Experiments}\label{sec:exp}
In this section, we perform numerical evaluations by following~\cite{kulunchakov2019estimate}, which was notably able to make SVRG and SAGA robust to stochastic noise, and accelerate SVRG. Code to reproduce the experiments is provided with the submission and more details and experiments are given in Appendix~\ref{appendix:exp}.

\vs
\paragraph{Formulations.}
Given training data
$(a_i,b_i)_{i=1,\ldots,n}$, with $a_i$ in $\Real^p$ and $b_i$ in $\{-1,+1\}$, we consider the optimization problem
\begin{displaymath}
    \min_{x \in \Real^p} \frac{1}{n}\sum_{i=1}^n \phi(b_i a_i^\top x)  + \frac{\mu}{2}\|x\|^2,
\end{displaymath}
where $\phi$ is either the logistic loss $\phi(u)=\log(1+e^{-u})$, or the squared hinge loss $\phi(u)=\frac{1}{2}\max(0,1-u)^2$, which are both $L$-smooth, with $L=0.25$ for logistic and $L=1$ for the squared hinge loss.
Studying the squared hinge loss is interesting since its gradients are unbounded on the optimization domain, which may break the bounded noise assumption.
The regularization parameter $\mu$ acts as the strong convexity constant for the problem and is chosen among the smallest values one would try when performing parameter search, \eg, by cross validation. Specifically, we consider $\mu=1/10n$ and $\mu=1/100n$, where $n$ is the number of training points; we also try $\mu=1/1000n$ to evaluate the numerical stability of methods in very ill-conditioned problems.
Following~\cite{smiso,kulunchakov2019estimate,zheng2018lightweight}, we
consider DropOut perturbations~\citep{wager2014}---that is, setting each component~$(\nabla f(x))_i$ to 0 with a probability~$\delta$ and to~$(\nabla f(x))_i/(1-\delta)$ otherwise. This procedure is motivated by the need of a simple optimization benchmark illustrating stochastic finite-sum problems, where the amount of perturbation is easy to control. The settings used in our experiments are~$\delta=0$ (no noise) and~$\delta\in\{0.01,0.1\}$.

\paragraph{Datasets.}
We consider three datasets with various number of points~$n$ and dimension~$p$. All the data points are normalized to have unit $\ell_2$-norm.
The description comes from~\cite{kulunchakov2019estimate}:
\begin{itemize}[leftmargin=*]
\item \textrm{alpha} is from  the  Pascal  Large  Scale Learning Challenge
website\footnote{\url{http://largescale.ml.tu-berlin.de/}} and contains $n=250\,000$
points in dimension $p=500$.
\item \textrm{gene} consists of gene expression data and the binary labels $b_i$ characterize two different types of breast cancer. This is a small dataset with $n=295$ and $p=8\,141$.
\item \textrm{ckn-cifar} is an image classification task where each image from the CIFAR-10 dataset\footnote{\url{https://www.cs.toronto.edu/~kriz/cifar.html}} is represented by using a two-layer unsupervised convolutional neural network~\citep{mairal2016end}. We consider here the binary classification task consisting of predicting the class 1 vs. other classes, and use our algorithms for the classification layer of the network, which is convex. The dataset contains $n=50\,000$ images and the dimension of the representation is $p=9\,216$. 
\end{itemize}

\paragraph{Methods.}
We consider the variants of SVRG and SAGA of~\cite{kulunchakov2019estimate},
which use decreasing step sizes when $\delta > 0$ (otherwise, they do not
converge). We use the suffix ``-d'' each time decreasing step sizes are used.
We also consider Katyuasha~\cite{accsvrg} when $\delta=0$, and the accelerated
SVRG method of~\cite{kulunchakov2019estimate}, denoted by acc-SVRG.
Then, SVRG-d, SAGA-d, acc-SVRG-d are used with the step size strategies
described in~\cite{kulunchakov2019estimate}, by using the code provided to us
by the authors.

\vs
\paragraph{Practical questions and implementation.}
In all setups, we choose the parameter~$\kappa$ according to theory, which are described in the previous section, following Catalyst~\cite{catalyst_jmlr}.
For composite problems, Proposition~\ref{prop:restart} suggests to use $x_\kmone$ as a warm start for inner-loop problems. For smooth ones,~\cite{catalyst_jmlr} shows that in fact, other choices such as $y_\kmone$ are appropriate and lead to similar complexity results. In our experiments with smooth losses, we use $y_\kmone$, which has shown to perform consistently better. 

The strategy for $\eta_k$ discussed in Proposition~\ref{prop:newcatalyst} suggests to use constant step-sizes for a while in the inner-loop, typically of order $1/(\kappa+L)$ for the methods we consider, before using an exponentially decreasing schedule. Unfortunately, even though theory suggests a rate of decay in $(1-\sqrt{q}/2)^k$, it does not provide useful insight on when decaying should start since the theoretical time requires knowing $\sigma^2$. A similar issue arise in stochastic optimization techniques involving iterate averaging \cite{bottou2018optimization}. We adopt a similar heuristic as in this literature and start decaying after $k_0$ epochs, with $k_0=30$.
Finally, we discuss the number of iterations of $\mtd$ to perform in the inner-loop. When $\eta_k=1$, the theoretical value is of order $\tilde{O}(1/\tau)=\tilde{O}(n)$, and we choose exactly $n$ iterations (one epoch), as in Catalyst~\cite{catalyst_jmlr}. After starting decaying the step-sizes ($\eta_k < 1$), we use $\lceil n/\eta_k\rceil$, according to theory.

\vs
\paragraph{Experiments and conclusions.}
We run each experiment five time with a different random seed and average the results. All curves also display one standard deviation.
Appendix~\ref{appendix:exp} contains numerous experiments, where we vary the amount of noise, the type of approach (SVRG vs. SAGA), the amount of regularization $\mu$, and choice of loss function. In Figure~\ref{fig:exp}, we show a subset of these curves. Most of them show that acceleration may be useful even in the stochastic optimization regime, consistently with~\cite{kulunchakov2019estimate}. {At the same time, all acceleration methods may not perform well for very ill-conditioned problems with $\mu=1/1000n$, where the sublinear convergence rates for convex optimization ($\mu = 0$) are typically better than the linear rates for strongly convex optimization ($\mu > 0$). However, these ill-conditioned cases are often unrealistic in the context of empirical risk minimization.
\begin{figure}[!hptb]
\vspace*{-0cm}
  \includegraphics[width=\linewidth]{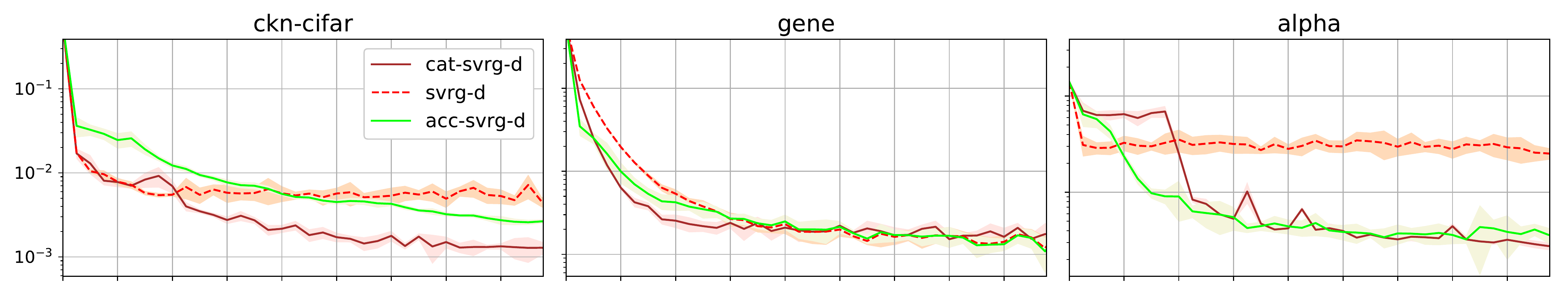}\vspace{-0.19cm}\\
  \includegraphics[width=\linewidth]{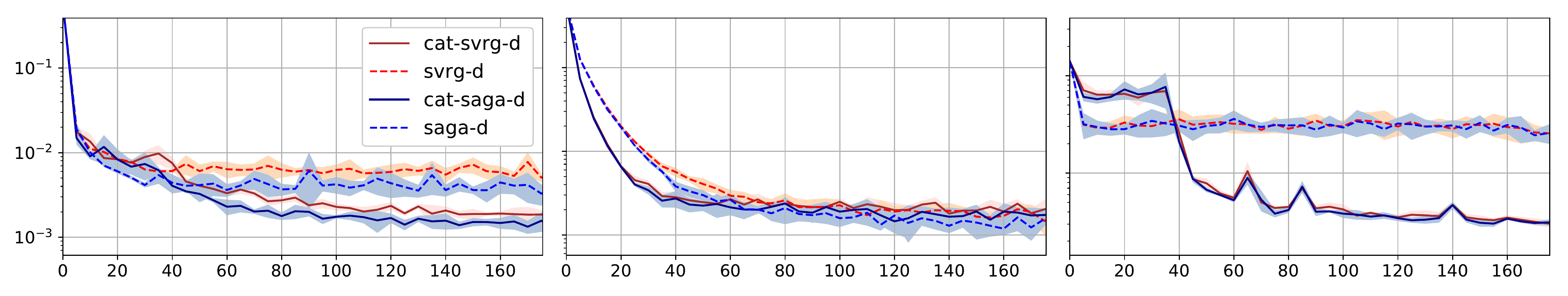}\\
  \vspace*{-0.6cm}
  \caption{Accelerating SVRG-like (top) and SAGA (bottom)  methods for $\ell_2$-logistic regression with $\mu = 1/(100 n)$  (bottom) for $\delta=0.1$.
  All plots are on a logarithmic scale for the objective function value, and the $x$-axis denotes the number of epochs.
  The colored tubes around each curve denote a standard deviations across $5$ runs. They do not look symmetric because of the logarithmic scale.
  }\label{fig:exp}
\end{figure}


\newpage
\subsection*{Acknowledgments}

This work was supported by the ERC grant SOLARIS (number 714381) and ANR 3IA MIAI@Grenoble Alpes. The authors would like to thank Anatoli Juditsky for numerous interesting discussions that greatly improved the quality of this manuscript.

{
\small
\bibliographystyle{abbrv}
\bibliography{main}
}

\newpage

\appendix

\section{Useful Results and Definitions}\label{appendix:useful}
In this section, we present auxiliary results and definitions.
\begin{definition}[Proximal operator]
Given a convex lower-semicontinuous function $\psi$ defined on $\Real^p$, the proximal operator of $\psi$ is defined as the unique solution of the strongly-convex problem
\begin{displaymath}
    \text{Prox}_{\psi}[y] = \argmin_{x \in \Real^p} \left\{ \frac{1}{2}\|y-x\|^2 + \psi(x) \right\}.
\end{displaymath}
\end{definition}

\begin{lemma}[Convergence rate of the sequences $(\alpha_k)_{k \geq 0}$ and~$(A_k)_{k \geq 0}$]\label{lemma:Ak}
Consider the sequence in $(0,1)$ defined by the recursion 
$$ \alpha_k^2 = (1-\alpha_k)\alpha_{\kmone}^2 + q \alpha_k~~~\text{with}~~~~0 \leq q < 1,$$
and define $A_k = \prod_{t=1}^k (1-\alpha_t)$. Then, 
\begin{itemize}
   \item if $q=0$ and $\alpha_0=1$, then, for all $k \geq 1$,
   $$\frac{2}{(k+2)^2} \leq A_k = \alpha_k^2 \leq  \frac{4}{(k+2)^2}.$$
   \item if $\alpha_0 = \sqrt{q}$, then  for all $k \geq 1$,
   $$A_k = (1-\sqrt{q})^k ~~~~\text{and}~~~~ \alpha_k = \sqrt{q}.$$
   \item if $\alpha_0 = 1$, then  for all $k \geq 1$, 
   $$A_k \leq \min \left( (1-\sqrt{q})^k, \frac{4}{(k+2)^2}\right) ~~~~\text{and}~~~~ \alpha_k \geq \max\left(\sqrt{q},\frac{\sqrt{2}}{k+2}\right).$$
\end{itemize}
\end{lemma}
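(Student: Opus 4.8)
The plan is to dispatch the three cases in order of increasing difficulty, using throughout that the recursion is well posed. Rearranging the defining equation, $\alpha_k$ is a root of the polynomial $P(a) \defin a^2 + (\alpha_{k-1}^2 - q)a - \alpha_{k-1}^2$, an upward parabola with $P(0) = -\alpha_{k-1}^2 < 0$ and $P(1) = 1-q > 0$; since the product of its roots is negative, $\alpha_k$ is its unique root in $(0,1)$, and $P<0$ on $(0,\alpha_k)$, $P>0$ on $(\alpha_k,\infty)$ — a fact I will reuse. The case $\alpha_0 = \sqrt q$ is then immediate by induction: if $\alpha_{k-1} = \sqrt q$, then $\alpha_k^2 = (1-\alpha_k)q + q\alpha_k = q$, so $\alpha_k = \sqrt q$, and $A_k = \prod_{t=1}^k(1-\sqrt q) = (1-\sqrt q)^k$.

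For the case $q=0$, $\alpha_0 = 1$, I would first combine $\alpha_k^2 = (1-\alpha_k)\alpha_{k-1}^2$ with $A_k = (1-\alpha_k)A_{k-1}$ to get $A_k = \alpha_k^2$ by induction (using $A_1 = 1-\alpha_1 = \alpha_1^2$). Then I would substitute $t_k \defin 1/\alpha_k$: solving the quadratic and rationalizing shows $t_k = \tfrac12\bigl(1 + \sqrt{1+4t_{k-1}^2}\bigr)$ with $t_0 = 1$, and squaring yields the clean identity $t_k^2 = t_{k-1}^2 + t_k$. From $\sqrt{1+4t_{k-1}^2} \ge 2t_{k-1}$ one gets $t_k \ge t_{k-1} + \tfrac12$, hence $t_k \ge \tfrac{k+2}{2}$ and $A_k = t_k^{-2} \le \tfrac{4}{(k+2)^2}$. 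For the matching lower bound, $\sqrt{1+4t_{k-1}^2} \le 1+2t_{k-1}$ gives $t_k \le t_{k-1}+1$, hence $t_k \le k+1$; plugging this into $t_k^2 = t_{k-1}^2 + t_k$ and running an induction with hypothesis $t_{k-1}^2 \le \tfrac{(k+1)^2}{2}$ gives $t_k^2 \le \tfrac{(k+1)^2}{2} + (k+1) \le \tfrac{(k+2)^2}{2}$, i.e. $A_k = t_k^{-2} \ge \tfrac{2}{(k+2)^2}$.

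For the last case ($\alpha_0 = 1$, arbitrary $q$), I would first establish $\alpha_k \ge \sqrt q$ by induction (base: $\alpha_0 = 1 \ge \sqrt q$): since $P(\sqrt q) = (1-\sqrt q)(q - \alpha_{k-1}^2) \le 0$ whenever $\alpha_{k-1} \ge \sqrt q$, and $P<0$ exactly on $(0,\alpha_k)$, this forces $\sqrt q \le \alpha_k$; consequently $A_k \le (1-\sqrt q)^k$. To obtain the $O(1/k^2)$-type bounds at the same time, I would compare $(\alpha_k)$ with the $q=0$ sequence $(\tilde\alpha_k)$ started at $\tilde\alpha_0 = 1$ from the previous paragraph: if $\alpha_{k-1} \ge \tilde\alpha_{k-1}$, then $\alpha_k^2 \ge (1-\alpha_k)\alpha_{k-1}^2 \ge (1-\alpha_k)\tilde\alpha_{k-1}^2$, i.e. $g(\alpha_k) \ge 0$ for $g(a) = a^2 + \tilde\alpha_{k-1}^2 a - \tilde\alpha_{k-1}^2$, whose unique positive root is $\tilde\alpha_k$; hence $\alpha_k \ge \tilde\alpha_k$, and therefore $A_k = \prod(1-\alpha_t) \le \prod(1-\tilde\alpha_t) = \tilde A_k$. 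Combining, $\alpha_k \ge \max(\sqrt q, \tilde\alpha_k) \ge \max\!\bigl(\sqrt q, \tfrac{\sqrt2}{k+2}\bigr)$ and $A_k \le \min\!\bigl((1-\sqrt q)^k, \tilde A_k\bigr) \le \min\!\bigl((1-\sqrt q)^k, \tfrac{4}{(k+2)^2}\bigr)$.

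The step I expect to be the crux is the lower bound $A_k \ge \tfrac{2}{(k+2)^2}$ in the $q=0$ case: a naive induction directly on $\alpha_k$ (or on $A_k$) fails by a margin of order $1/k^2$, and the repair hinges on first spotting the identity $t_k^2 = t_{k-1}^2 + t_k$ and then feeding in the coarse bound $t_k \le k+1$. Everything else reduces to bookkeeping with the sign pattern of $P$ and routine inductions.
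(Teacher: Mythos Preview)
Your proof is correct. The second and third cases match the paper almost exactly (the polynomial-sign framework you set up makes the inductions a bit crisper than the paper's contradiction arguments, but the substance is the same).

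The genuine difference is in the $q=0$ case. You pass to $t_k = 1/\alpha_k$, extract the identity $t_k^2 = t_{k-1}^2 + t_k$, and get both bounds by sandwiching the square-root increment. The paper instead proves the upper bound $\alpha_k \le \tfrac{2}{k+2}$ by a one-line contradiction, and then gets the lower bound on $A_k$ by feeding that upper bound back into the product:
\[
A_k = \prod_{t=1}^k (1-\alpha_t) \ge \prod_{t=1}^k\Bigl(1-\tfrac{2}{t+2}\Bigr) = \prod_{t=1}^k \tfrac{t}{t+2} = \tfrac{2}{(k+1)(k+2)} \ge \tfrac{2}{(k+2)^2}.
\]
So the step you flag as the crux is, in the paper's route, a telescoping one-liner rather than a two-stage induction; the trick is to work with the product $A_k$ directly instead of with $\alpha_k^2$. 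Your $t_k$-substitution is more systematic and self-contained (both bounds flow from the same identity), while the paper's argument is shorter but relies on noticing that the product telescopes once you have $\alpha_t \le \tfrac{2}{t+2}$.
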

\begin{proof}
We prove the three points, one by one.
   
\paragraph{First point.}
Let us prove the first point when $q=0$ and $\alpha_0=1$. The relation $A_k = \alpha_k^2$ is obvious for all $k \geq 1$ and the relation 
$\alpha_k^2  \leq  \frac{4}{(k+2)^2}$ holds for $k=0$. By induction, let us assume that we have the relation 
$\alpha_{\kmone}^2  \leq  \frac{4}{(k+1)^2}$ and let us show that it propagates for $\alpha_k^2$.
Assume, by contradiction, that $\alpha_k^2 > \frac{4}{(k+2)^2}$, meaning that $\alpha_k > \frac{2}{(k+2)}$.
Then,
\begin{displaymath}
   \alpha_k^2 = (1-\alpha_k)\alpha_{\kmone}^2 \leq (1-\alpha_k)\frac{4}{(k+1)^2} < \frac{4 k}{ (k+2)(k+1)^2} =\frac{4}{ (k+2)(k+2 + \frac{1}{k})} < \frac{4}{(k+2)^2},
\end{displaymath}
and we obtain a contradiction. Therefore, $\alpha_k^2 \leq \frac{4}{(k+2)^2}$ and the induction hypothesis allows us to conclude for all $k \geq 1$.
Then, note~\cite{paquette2017catalyst} that we also have for all $k \geq 1$,
\begin{displaymath}
   A_k = \prod_{t=1}^k (1-\alpha_t) \geq \prod_{t=1}^k \left(1-\frac{2}{t+2}\right) = \frac{2}{(k+1)(k+2)} \geq \frac{2}{(k+2)^2}.
\end{displaymath}

\paragraph{Second point.} The second point is obvious by induction.
 
 \paragraph{Third point.}
For the third point, we simply assume $\alpha_0=1$ such that $\alpha_0 \geq \sqrt{q}$. Then, the relation $\alpha_k \geq \sqrt{q}$ and therefore $A_k \leq (1-\sqrt{q})^k$ are easy to show by induction.
   Then, consider the sequence defined recursively by $u_k^2 = (1-u_k)u_{\kmone}^2$ with $u_0=1$. From the first point, we have that $\frac{\sqrt{2}}{k+2} \leq u_k \leq\frac{{2}}{k+2}$.
   We will show that $\alpha_k \geq u_k$ for all $k \geq 0$, which will be sufficient to conclude since then we would have $A_k \leq \prod_{t=1}^k (1-u_t) \leq \frac{4}{(k+2)^2}$.
   First, we note that $\alpha_0 = u_0$; then, assume that $\alpha_{\kmone} \geq u_{\kmone}$ and also assume by contradiction that $\alpha_k > u_k$.
   This implies that
   $$ u_k^2 = (1-u_k) u_{\kmone}^2  \leq (1-u_k) \alpha_{\kmone}^2 < (1-\alpha_k) \alpha_{\kmone}^2 \leq  \alpha_k^2,$$
   which contradicts the assumption $\alpha_k > u_k$. This allows us to conclude by induction.
\end{proof}

\begin{lemma}[Convergence rate of sequences $\Theta_k=\prod_{i=1}^k(1-\theta_i)$]\label{lemma:theta}
Consider the sequence $\theta_j=\frac{\gamma}{(1+j)^{1+\gamma}}$ with $\gamma$ in $(0,1]$. Then,
\begin{equation}
   e^{-(1+\gamma)} \leq \Theta_k \leq 1. \label{eq:theta}
\end{equation}
\end{lemma}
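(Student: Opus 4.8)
The plan is to handle the two inequalities separately, the upper bound being essentially immediate and the lower bound requiring a logarithmic estimate. For the upper bound, observe that for every $j \geq 1$ and $\gamma \in (0,1]$ we have $\theta_j = \gamma/(1+j)^{1+\gamma} \leq \gamma/2 \leq 1/2$, so each factor $1-\theta_j$ lies in $(0,1]$; hence $\Theta_k = \prod_{i=1}^k (1-\theta_i) \leq 1$.

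For the lower bound, I would take logarithms and show that $-\log\Theta_k = \sum_{i=1}^k \brc{-\log(1-\theta_i)} \leq 1+\gamma$, which immediately yields $\Theta_k \geq e^{-(1+\gamma)}$. The one point that needs care is keeping the \emph{sharp} coefficient $1$ in front of $\sum_i \theta_i$: a crude estimate such as $-\log(1-x)\leq 2x$ on $[0,1/2]$ would only give $-\log\Theta_k \leq 2$, which is not enough since the target $e^{-(1+\gamma)}$ degrades to $e^{-1}$ as $\gamma \downarrow 0$ and leaves no room to spare. So I would use the elementary inequality $-\log(1-x) \leq x + x^2$ for $x \in [0,1/2]$, which follows by noting that $x \mapsto x + x^2 + \log(1-x)$ vanishes at $0$ and has derivative $x(1-2x)/(1-x) \geq 0$ on $[0,1/2]$, and apply it with $x = \theta_i \in [0,1/2]$.

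It then remains to bound the two resulting sums. Since $t \mapsto \gamma(1+t)^{-(1+\gamma)}$ is decreasing, comparison with the integral gives $\sum_{i=1}^k \theta_i \leq \int_0^k \frac{\gamma}{(1+t)^{1+\gamma}}\,dt = 1 - (1+k)^{-\gamma} \leq 1$. For the squares, $\theta_i^2 = \gamma^2 (1+i)^{-(2+2\gamma)} \leq \gamma^2(1+i)^{-2}$, and since $\sum_{i \geq 1}(1+i)^{-2} < \sum_{i \geq 1}\frac{1}{i(i+1)} = 1$, we get $\sum_{i=1}^k \theta_i^2 \leq \gamma^2 \leq \gamma$, using $\gamma \leq 1$. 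Combining, $-\log\Theta_k \leq \sum_{i=1}^k(\theta_i + \theta_i^2) \leq 1 + \gamma$, which finishes the proof.

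The argument is routine once one commits to the logarithmic approach; the only genuinely delicate choice is picking an upper estimate for $-\log(1-x)$ precise enough to keep the coefficient of $\sum_i \theta_i$ exactly equal to one, rather than $4/3$ or $2$, since the bound must be tight at small $\gamma$.
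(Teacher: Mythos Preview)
Your proof is correct. Both you and the paper take logarithms and reduce to showing $\sum_{j=1}^k \bigl(-\log(1-\theta_j)\bigr) \leq 1+\gamma$, but you use a different elementary inequality at the core step. The paper uses $-\log(1-x) \leq x/(1-x)$ (equivalently $\log(1+u)\geq u/(1+u)$), which leads to the bound $\gamma/\bigl((1+j)^{1+\gamma}-\gamma\bigr)$; to turn this into a summable expression the paper then proves the auxiliary fact $(1+j)^{1+\gamma}-\gamma \geq j^{1+\gamma}$ and integrates $\gamma/x^{1+\gamma}$ from $1$ to $k$ after isolating the $j=1$ term. Your route via $-\log(1-x)\leq x+x^2$ on $[0,1/2]$ splits the sum into $\sum\theta_j$ and $\sum\theta_j^2$ and bounds each directly, avoiding the algebraic shift lemma. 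Your argument is slightly more elementary and makes the tightness at small~$\gamma$ transparent (the quadratic remainder contributes at most $\gamma$), while the paper's approach has the modest advantage of not needing $\theta_j\leq 1/2$ explicitly. Either way the essential content is the same logarithmic estimate.
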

\begin{proof}
We use the classical inequality $\log(1+u) \geq \frac{u}{1+u}$ for all $u > -1$:
\begin{displaymath}
-\log(\Theta_k) = - \sum_{j=1}^k \log\left( 1 -\frac{\gamma}{(1+j)^{1+\gamma}}\right) 
               \leq \sum_{j=1}^k \frac{\gamma}{(1+j)^{1+\gamma} - \gamma}  \leq \sum_{j=1}^k \frac{\gamma}{j^{1+\gamma}}, 
\end{displaymath}
when noting that the function $g(x) = (1+x)^{1+\gamma} - x^{1+\gamma}$ is greater than $\gamma$ for all $x \geq 1$, since $g(1) \geq 1 \geq \gamma$ and $g$ is non-decreasing.
Then,
\begin{displaymath}
 -\log(\Theta_k) \leq \sum_{j=1}^k \frac{\gamma}{j^{1+\gamma}} \leq \gamma +  \gamma \int_{x=1}^k \frac{1}{x^{1+\gamma}} dx = \gamma + 1 - \frac{1}{k^\gamma} \leq \gamma+1.
\end{displaymath}
Then, we immediately obtain~(\ref{eq:theta}).

\end{proof}
\section{Details about Complexity Results}\label{appendix:restart}

\subsection{Details about~(\ref{eq:restart2})}\label{appendix:sec:restart1}
Consider the complexity~(\ref{eq:aux}) with $h=F$. To achieve the accuracy $2B\sigma^2$, it is sufficient to run the method~$\mtd$ for~$t_0$ iterations, such that 
$$  C (1-\tau)^{t_0}(F(x_0)-F^\star)  \leq B\sigma^2.$$  
It is then easy to see that this inequality is satisfied as soon as $t_0$ is greater than $\frac{1}{\tau}\log(C (F(x_0)-F^\star)/B\sigma^2)$. 
Since $\varepsilon \leq B\sigma^2$ and using the concavity of the logarithm function, it is also sufficient to choose $t_0= \frac{1}{\tau}\log(C (F(x_0)-F^\star)/\varepsilon)$.

Then, we perform $K$ restart stages such that $\varepsilon_K \leq \varepsilon$. 
Each stage is initialized with a point $x_k$ satisfying $\E[F(x_k)-F^\star] \leq \varepsilon_\kmone$, and the goal of each stage is to reduce the error by a factor $1/2$.
Given that $\eta_k$ increases the computational cost, the complexity of the $k$-th stage is then upper-bounded by
$\frac{2^k }{\tau} \log(2C)$, leading to the global complexity
\begin{equation*} 
O\left( \frac{1}{\tau} \log\left( \frac{C(F(x_0)-F^\star)}{\varepsilon}  \right)+ \sum_{k=1}^K \frac{2^k}{\tau} \log\left( { 2C}  \right)         \right) ~~~\text{with}~~~ K = \left \lceil \log_2\left(   \frac{2B\sigma^2}{\varepsilon}  \right)\right\rceil, 
\end{equation*} 
and~(\ref{eq:restart2}) follows by elementary calculations.

\subsection{Obtaining~(\ref{eq:restart2}) from~(\ref{eq:aux2})}
Since $h$ is $\mu$-strongly convex, we notice that~(\ref{eq:aux2}) implies the rate
 \begin{equation*}
   \E[h(z_t) - h^\star] \leq \frac{D (h(z_0)-h^\star)}{\mu t^d} + \frac{B \sigma^2}{2}, 
\end{equation*}
by using the strong convexity inequality $h(z_0)\geq h^\star +
\frac{\mu}{2}\|z_0-z^\star\|^2$.
After running the algorithm for $t'= \lceil (2D/\mu)^{1/d} \rceil$ iterations, we can show that
 \begin{equation*}
   \E[h(z_{t'}) - h^\star] \leq \frac{h(z_0)-h^\star}{2} + \frac{B \sigma^2}{2}.
\end{equation*}
Then, when restarting the procedure $s$ times (using the solution of the
previous iteration as initialization), and denoting by $h_{st'}$ the last iterate, it is easy to show that
 \begin{equation*}
   \E[h(x_{st'}) - h^\star] \leq \frac{h(x_0)-h^\star}{2^s} + \frac{B \sigma^2}{2}\left( \sum_{i=0}^{s-1} \frac{1}{2^i}\right) \leq \frac{h(z_0)-h^\star}{2^s} + B \sigma^2.
\end{equation*}
Then, calling $t=st'$, we can use the inequality $2^{-u} \leq 1 - \frac{u}{2}$ for $u$ in $[0,1]$, due to convexity, and
 \begin{equation*}
   \E[h(z_{t}) - h^\star] \leq (h(z_0)-h^\star)\left({2^{-1/t'}}\right)^t + B \sigma^2 =  (h(z_0)-h^\star)\left(1 - \frac{1}{2t'} \right)^k + B \sigma^2,
\end{equation*}
which gives us~(\ref{eq:aux}) with $C=1$ and $\tau=\frac{1}{2t'}$.
It is then easy to obtain~(\ref{eq:restart2}) by following similar steps as in
Section~\ref{appendix:sec:restart1}, by noticing that the restart frequency is
of the same order $O(1/\tau)$.

\subsection{Details about~(\ref{eq:stoch_catalyst})}\label{appendix:sec:stoch_catalyst}
\paragraph{Inner-loop complexity.}
Since $\eta_k$ is chosen such that the bias $\eta_kB\sigma^2$ is smaller than $\varepsilon_k$, the number of iterations of $\mtd$ to solve
the sub-problem is $\tilde{O}(\tau) = O(\log(1/q)\tau)$, as in the deterministic case, and the complexity is thus $\tilde{O}(\tau/\eta_k)$.

\paragraph{Outer-loop complexity.}
Since $\E[F(x_k)-F^\star] \leq O((1-\sqrt{q}/3)^k(F(x_0)-F^\star))/q$ according to Proposition~\ref{prop:B}, it suffices to choose 
\begin{displaymath}
    K = O\left(\frac{1}{\sqrt{q}}\log\left(\frac{F(x_0)-F^\star}{q\varepsilon}\right)\right)
\end{displaymath}
iterations to guarantee $\E[F(x_K)-F^\star] \leq \varepsilon = O( \varepsilon_K/q) = O((1-\sqrt{q}/3)^K(F(x_0)-F^\star)/q)$.

\paragraph{Global complexity.}
The total complexity to guarantee $\E[F(x_k)-F^\star] \leq \varepsilon$ is then
\begin{displaymath}
\begin{split}
    C & = \sum_{k=1}^K \tilde{O}\left(\frac{\tau}{\eta_k}\right) \\ 
    & \leq   \tilde{O}\left( \sum_{k=1}^K {\tau} + \sum_{k=1}^K \frac{{B\sigma^2 \tau}}{\varepsilon_k}\right) \\
    & =   \tilde{O}\left( \sum_{k=1}^K {\tau} + \sum_{k=1}^K \frac{{B\sigma^2 \tau}}{\left(1-\frac{\sqrt{q}}{3}\right)^k(F(x_0)-F^\star)}\right) \\
    & =   \tilde{O}\left( \frac{\tau}{\sqrt{q}}\log\left(\frac{F(x_0)-F^\star}{\varepsilon}\right) + \frac{{B\sigma^2 \tau}}{\sqrt{q}\left(1-\frac{\sqrt{q}}{3}\right)^{K+1}(F(x_0)-F^\star)}\right) \\
    & =   \tilde{O}\left( \frac{\tau}{\sqrt{q}}\log\left(\frac{F(x_0)-F^\star}{\varepsilon}\right) + \frac{{B\sigma^2 \tau}}{{q^{3/2}}\varepsilon}\right), \\
\end{split}
\end{displaymath}
where the last relation uses the fact that $\varepsilon = O( \varepsilon_K/q) = O((1-\sqrt{q}/3)^K(F(x_0)-F^\star)/q)$.

\subsection{Complexity of accelerated stochastic proximal gradient descent with inexact prox}
Assume that $h_k(x_k)-h_k^\star \leq \varepsilon_k$. Then, following similar steps as in~(\ref{eq:sgd2}),
\begin{displaymath}
\begin{split}
  \E[F(x_k)] & \leq \E[h_k(x_k)] + \E[(g_k-\nabla f(y_\kmone))^\top (x_k - y_\kmone)] \\ 
   & = \E[h_k(x_k)] + \E[(g_k-\nabla f(y_\kmone))^\top (x_k - u_\kmone)] \\ 
   & = \E[h_k(x_k)] + \E[(g_k-\nabla f(y_\kmone))^\top (x_k - x_k^\star)]  + \E[(g_k-\nabla f(y_\kmone))^\top (x_k^\star-u_\kmone)] \\ 
   & \leq \E[h_k(x_k)] + \E[(g_k-\nabla f(y_\kmone))^\top (x_k - x_k^\star)]  + \frac{\sigma^2}{\kappa+\mu} \\ 
   & \leq \E[h_k(x_k)] + \frac{\E[\|g_k-\nabla f(y_\kmone)\|^2]}{2(\kappa+\mu)} + \frac{(\kappa+\mu)\E[\|x_k - x_k^\star\|^2]}{2}  + \frac{\sigma^2}{\kappa+\mu} \\ 
   & \leq \E[h_k(x_k)] + \E[h_k(x_k) - h_k^\star]  + \frac{3\sigma^2}{2(\kappa+\mu)} \\ 
   & \leq \E[h_k^\star] + 2 \varepsilon_k + \frac{3\sigma^2}{2(\kappa+\mu)}. 
\end{split}
\end{displaymath}
And thus, $\delta_k = 2 \varepsilon_k + \frac{3\sigma^2}{2(\kappa+\mu)}$.

\section{Proofs of Main Results}\label{appendix:proofs}
\subsection{Proof of Propositions~\ref{prop:A} and~\ref{prop:B}}
\begin{proof}
In order to treat both propositions jointly, we introduce the quantity
\begin{displaymath}
   w_k = \left\{ \begin{array}{ll}
      x_k^\star & \text{for Algorithm}~1 \\
      x_k & \text{for Algorithm}~2 \\
      \end{array}\right.,
\end{displaymath}
and, for all $k \geq 1$,
\begin{equation}
   v_k = w_k + \frac{1-\alpha_{\kmone}}{\alpha_\kmone}(w_k - x_\kmone), \label{eq:defs}
\end{equation}
with $v_0=x_0$, as well as $\eta_k = \frac{\alpha_k-q}{1-q}$ for all $k \geq 0$.

Note that the following relations hold for all $k \geq 1$, keeping in mind that $\alpha_k^2=(1-\alpha_k)\alpha_\kmone^2 + q\alpha_k$:
\begin{equation*}
\begin{split}
1-\eta_k & = \frac{1-\alpha_k}{1-q} = \frac{(\kappa+\mu)(1-\alpha_k)}{\kappa} \\
\eta_k & = \frac{\alpha_k - q}{1-q} = \frac{\alpha_k^2 - q\alpha_k}{\alpha_k -q\alpha_k}  = \frac{\alpha_\kmone^2(1-\alpha_k)}{\alpha_k -  \alpha_k^2 + (1-\alpha_k)\alpha_\kmone^2} = \frac{\alpha_\kmone^2}{\alpha_\kmone^2 + \alpha_k}. 
\end{split}
\end{equation*}
Then, based on the previous relations, we have
\begin{equation*}
\begin{split}
 y_k & = w_k + \beta_k (w_k - x_\kmone) + \frac{(\kappa+\mu)(1-\alpha_k)}{\kappa}(x_k -w_k) \\
  & = w_k + \frac{\alpha_\kmone(1-\alpha_\kmone)}{\alpha_\kmone^2 + \alpha_k} (w_k - x_\kmone) + (1-\eta_k)(x_k -w_k) \\
  & = w_k + \frac{\eta_k(1-\alpha_\kmone)}{\alpha_\kmone} (w_k - x_\kmone) + (1-\eta_k)(x_k -w_k) \\
  & = \eta_k v_k + (1-\eta_k)x_k,
\end{split}
\end{equation*}
which is similar to the relation used in~\cite{catalyst_jmlr} when $w_k=x_k$.
Then, the proof differs from~\cite{catalyst_jmlr} since we introduce the surrogate function $h_k$. For all $x$ in $\Real^p$,
\begin{equation}
\begin{split}
    h_k(x) & \geq h_k^\star + \frac{\kappa+\mu}{2}\|x -x_k^\star\|^2 ~~~\text{(by strong convexity, see \hypA)}\\
           & = h_k^\star + \frac{\kappa+\mu}{2}\|x-w_k\|^2 + \underbrace{\frac{\kappa+\mu}{2}\|w_k-x_k^\star\|^2 + (\kappa+\mu)\langle x-w_k, w_k-x_k^\star\rangle}_{- \Delta_k(x)}. 
\end{split} \label{eq:auxhk}
\end{equation}
Introduce now the following quantity for the convergence analysis: 
$$z_{\kmone} = \alpha_\kmone x^\star + (1-\alpha_\kmone) x_\kmone,$$
and consider $x=z_\kmone$ in~(\ref{eq:auxhk}) while taking expectations, noting that  all random variables indexed by $\kmone$ are deterministic given $\Fcal_\kmone$,
\begin{equation}
\begin{split}
     \E[F(x_k)] & \leq \E[h_k^\star] + \delta_k \qquad \text{(by \hypC)}\\
                & \leq \E[ h_k(z_\kmone)] -\E\left[ \frac{\kappa+\mu}{2}\|z_\kmone-w_k\|^2 \right] + \E[\Delta_k(z_\kmone)] + \delta_k \\
                & \leq \E[F(z_\kmone)] + \E\left[ \frac{\kappa}{2}\|z_\kmone-y_\kmone\|^2 \right] -\E\left[ \frac{\kappa+\mu}{2}\|z_\kmone-w_k\|^2 \right] + \E[\Delta_k(z_\kmone)] + \delta_k,
\end{split}\label{eq:sufficient_descent}
\end{equation}
where the last inequality is due to \HypB.

Let us now open a parenthesis and derive a few relations that will be useful to find a Lyapunov function.
To use more compact notation, define $X_k = \E[\|x^\star- x_k\|^2]$, $V_k = \E[\|x^\star- v_k\|^2]$ and $F_k = \E[F(x_k)-F^\star]$, and note that
\begin{equation}
\begin{split}
  \E[F(z_\kmone)] & \leq \alpha_\kmone f^\star + (1-\alpha_\kmone) \E[F(x_\kmone)] - \frac{\mu \alpha_\kmone(1-\alpha_\kmone)}{2}X_\kmone \\
  \E[\|z_\kmone - w_k\|^2] & = \alpha_\kmone^2 V_k \\
  \E[\|z_\kmone - y_\kmone\|^2] & \leq \alpha_\kmone(\alpha_\kmone - \eta_\kmone) X_{\kmone} + \alpha_\kmone \eta_\kmone V_\kmone. \\
\end{split}\label{eq:relations}
\end{equation}
The first relation is due to the convexity of $f$; the second one can be
obtained from the definition of~$v_k$ in~(\ref{eq:defs}) after simple
calculations; the last one can be obtained as in the proof of Theorem 3
in~\cite{catalyst_jmlr} (end of page 16). 

We may now come back to~(\ref{eq:sufficient_descent}) and we use the relations~(\ref{eq:relations}):
\begin{multline*}
   F_k + \frac{(\kappa+\mu)\alpha_{\kmone}^2}{2}V_k   \leq  (1-\alpha_\kmone)F_{k-1} - \frac{\mu \alpha_\kmone(1-\alpha_\kmone)}{2}X_\kmone+ \\ \frac{\kappa}{2}
   \alpha_\kmone(\alpha_\kmone - \eta_\kmone) X_{\kmone} + \frac{\kappa}{2}\alpha_\kmone \eta_\kmone V_\kmone 
     +  {{\delta}_k} + \E[\Delta_k(z_\kmone)]. 
\end{multline*}
It is then easy to see that the terms involving $X_\kmone$ cancel each other since $\eta_\kmone=\alpha_\kmone-\frac{\mu}{\kappa}(1-\alpha_\kmone)$.

\paragraph{Lyapunov function.}
We may finally define the Lyapunov function 
\begin{equation}
   S_k = (1-\alpha_k)F_k + \frac{\kappa \alpha_k \eta_k}{2}V_k. \label{eq:Sk}
\end{equation}
and we obtain
 \begin{equation}
   \frac{S_k}{1-\alpha_k} \leq S_{k-1} 
       + {{\delta}_k} + \E[\Delta_k(z_\kmone)], \label{eq:lyapunov}
\end{equation}
For variant Algorithm~\ref{alg:meta}, we have $\Delta_k(z_\kmone)=0$ since
$w_k=x_k^\star$, and we obtain the following relation by unrolling the
recursion:
\begin{equation}
   S_k \leq {A_k}\left (S_0 + \sum_{j=1}^k \frac{\delta_j}{A_{j-1}} \right) \qquad \text{with} \qquad A_j = \prod_{i=1}^j(1-\alpha_i). \label{eq:propA}
\end{equation}

\paragraph{Specialization to $\mu > 0$.}
When $\mu > 0$, we have $\alpha_0=\sqrt{q}$ and 
\begin{equation}
\begin{split}
    S_0 & = (1-\sqrt{q})(F(x_0)-F^\star) + \frac{\kappa \sqrt{q} (\sqrt{q}-q)}{2(1-q)}\|x_0-x^\star\|^2 \\
        & = (1-\sqrt{q})(F(x_0)-F^\star) + \frac{(\kappa+\mu) \sqrt{q} (\sqrt{q}-q)}{2}\|x_0-x^\star\|^2 \\
        & = (1-\sqrt{q})(F(x_0)-F^\star) + \frac{\mu(1-\sqrt{q})}{2}\|x_0-x^\star\|^2 \\
        & \leq 2(1-\sqrt{q})(F(x_0)-F^\star),
\end{split}\label{eq:S0}
\end{equation}
by using the strong convexity inequality $F(x_0) \geq F^\star + \frac{\mu}{2}\|x_0-x^\star\|^2$.
Then, noting that $\E[F(x_k)-F^\star] \leq \frac{S_k}{1-\sqrt{q}}$ and $A_k=(1-\sqrt{q})^k$ (Lemma~\ref{lemma:Ak}), we immediately obtain the first part of~(\ref{eq:propAb}) from~(\ref{eq:propA}).

\paragraph{Specialization to $\mu = 0$.}
When $\mu =0$, we have $\alpha_0=1$ and  $S_0 = \frac{\kappa}{2} \|x_0-x^\star\|^2$.
Then, according to Lemma~\ref{lemma:Ak} and~(\ref{eq:propA}), for $k \geq 1$,
\begin{equation}
   \E[F(x_k)-F^\star] \leq \frac{S_k}{1-\alpha_k} \leq \frac{\kappa \|x_0-x^\star\|^2}{2}A_\kmone + \sum_{j=1}^k \frac{\delta_jA_\kmone}{A_{j-1}},
\end{equation}
and we obtain the second part of~(\ref{eq:propAb}) noting that $A_\kmone \leq \frac{4}{(k+1)^2}$ and that $A_\jmone \geq \frac{2}{(j+1)^2}$.
Then, Proposition~\ref{prop:A} is proven.

\paragraph{Proof of Proposition~\ref{prop:B}.}
When $w_k=x_k$, we need to control the quantity $\Delta_k(z_\kmone)$. Consider any scalar $\theta_k$ in $(0,1)$. Then,
\begin{displaymath}
\begin{split}
   \Delta_k(z_\kmone) & = - \frac{\kappa+\mu}{2}\|x_k-x_k^\star\|^2  - (\kappa+\mu)\langle z_\kmone - x_k, x_k - x_k^\star \rangle \\
                 & = - \frac{\kappa+\mu}{2}\|x_k-x_k^\star\|^2  -  (\kappa+\mu)\alpha_\kmone \langle x^\star - v_k, x_k - x_k^\star \rangle \\
                 & \leq - \frac{\kappa+\mu}{2}\|x_k-x_k^\star\|^2  +  (\kappa+\mu)\alpha_\kmone \|x^\star-v_k\| \| x_k - x_k^\star\| \\
       & \leq   \left(\frac{1}{\theta_k}-1\right)\frac{\kappa+\mu}{2}\|x_k-x_k^\star\|^2  + \frac{\theta_k(\kappa+\mu)\alpha_\kmone^2}{2}\|x^\star-v_k\|^2   ~~\text{(Young's inequality)} \\
       & \leq   \left(\frac{1}{\theta_k}-1\right)(h_k(x_k)-h_k^\star)  + \frac{\theta_k(\kappa+\mu)\alpha_\kmone^2}{2}\|x^\star-v_k\|^2   ~~~~\text{(since $\theta_k \leq 1$)} \\
       & \leq   \left(\frac{1}{\theta_k}-1\right)(h_k(x_k)-h_k^\star)  + \frac{\theta_k(\kappa+\mu)(\alpha_k^2 - \alpha_k q)}{2(1-\alpha_k)}\|x^\star-v_k\|^2    \\
       & =   \left(\frac{1}{\theta_k}-1\right)(h_k(x_k)-h_k^\star)  + \frac{\theta_k\kappa\alpha_k\eta_k}{2(1-\alpha_k)}\|x^\star-v_k\|^2.  
   \end{split}
\end{displaymath}
Then, we take expectations and, noticing that the quadratic term involving $\|x^\star-v_k\|^2$ is smaller than $\theta_k S_k/(1-\alpha_k)$ in expectation (from the definition of $S_k$ in~(\ref{eq:Sk})), we obtain
\begin{displaymath}
 \E[\Delta_k(z_\kmone)] \leq \left(\frac{1}{\theta_k}-1\right)\varepsilon_k + \frac{\theta_k S_k}{1-\alpha_k},
\end{displaymath}
and from~(\ref{eq:lyapunov}),
$$  S_k \leq \frac{(1-\alpha_k)}{(1-\theta_k)}\left(S_{\kmone}   + \delta_k + \left(\frac{1}{\theta_k}-1\right)\varepsilon_k\right).$$
By unrolling the recursion, we obtain 
\begin{equation} 
S_k \leq \frac{A_k}{\Theta_k}\left
  (S_0 + \sum_{j=1}^k \frac{\Theta_{j-1}}{A_{j-1}}\left( \delta_j - \varepsilon_j + \frac{\varepsilon_j}{\theta_j}  \right) 
  \right)~~~\text{with}~~~ A_j = \prod_{i=1}^j(1-\alpha_i)~~~\text{and}~~~
  \Theta_j = \prod_{i=1}^j(1-\theta_i). \label{eq:propB} 
\end{equation}
\paragraph{Specialization to $\mu > 0$.}
When $\mu > 0$, we have $\alpha_k=\sqrt{q}$ for all $k \geq 0$. Then, we may choose $\theta_k=\frac{\sqrt{q}}{2}$; then, $1-\sqrt{q} \leq \left(1-\frac{\sqrt{q}}{2}\right)^2$ and
 $\frac{A_k}{\Theta_k} \leq
\big(1-\frac{\sqrt{q}}{2}\big)^k$ for all $k \geq 0$. By using the relation~(\ref{eq:S0}), we obtain
\begin{displaymath}
\begin{split}
   S_k & \leq 2\left(1-\frac{\sqrt{q}}{2}\right)^k (1-\sqrt{q})(F(x_0)-F^\star) + 2\sum_{j=1}^k \left(\frac{1-\sqrt{q}  }{1-\frac{\sqrt{q}}{2}}\right)^{k-j+1}\left( \delta_j - \varepsilon_j +  \frac{\varepsilon_j}{\sqrt{q}}\right) \\
       & \leq (1-\sqrt{q}) \left(2\left(1-\frac{\sqrt{q}}{2}\right)^k(F(x_0)-F^\star) + 4\sum_{j=1}^k \left(\frac{1-\sqrt{q}  }{1-\frac{\sqrt{q}}{2}}\right)^{k-j}\left( \delta_j - \varepsilon_j +  \frac{\varepsilon_j}{\sqrt{q}}\right)  \right)\\
       & \leq (1-\sqrt{q}) \left(2\left(1-\frac{\sqrt{q}}{2}\right)^k(F(x_0)-F^\star) + 4\sum_{j=1}^k \left({1-\frac{\sqrt{q}}{2}}\right)^{k-j}\left( \delta_j - \varepsilon_j +  \frac{\varepsilon_j}{\sqrt{q}}\right)   \right),
\end{split}
\end{displaymath}
where the second inequality uses $\frac{1}{1-\frac{\sqrt{q}}{2}} \leq 2$.
Since $(1-\sqrt{q})\E[F(x_k)-F^\star] \leq S_k$, we obtain the first part of Proposition~(\ref{prop:B}).
\paragraph{Specialization to $\mu = 0$.}
When $\mu =0$, we have $\alpha_0=1$ and  $S_0 = \frac{\kappa}{2} \|x_0-x^\star\|^2$. We may then choose $\theta_k = \frac{\gamma}{(k+1)^{1+\gamma}}$ for any $\gamma$ in $(0,1]$, 
leading to $e^{-(1+\gamma)} \leq \Theta_k \leq 1$ for all $k \geq 0$ according to Lemma~\ref{lemma:theta}.
Besides, according to the proof of Lemma~\ref{lemma:Ak}, $\frac{2}{(k+2)^2} \leq A_k \leq \frac{4}{(k+2)^2}$ for all $k \geq 1$.

Then, from~(\ref{eq:propB}),
\begin{displaymath}
\begin{split}
 \E[F(x_k)-F^\star] & \leq \frac{A_\kmone}{\Theta_k}\frac{\kappa\|x_0-x^\star\|^2}{2}  +  \sum_{j=1}^k \frac{A_\kmone \Theta_\jmone}{\Theta_k A_\jmone}\left(\delta_j - \varepsilon_j + \frac{\varepsilon_j}{\gamma}(1+j)^{1+\gamma}     \right) \\ 
 & \leq \frac{2e^{1+\gamma}}{(k+1)^2} \left( {\kappa}\|x_0-x^\star\|^2  +  \sum_{j=1}^k (j+1)^2(\delta_j-\varepsilon_j) + \frac{(j+1)^{3+\gamma}\varepsilon_j}{\gamma}\right),
\end{split}
\end{displaymath}
which yields the second part of Proposition~(\ref{prop:B}).
\end{proof}

\subsection{Proof of Proposition~\ref{prop:restart}}
Assume that for $k \geq 2$, we have the relation 
\begin{equation}
\E[h_\kmone(x_\kmone)-h_\kmone^\star] \leq \varepsilon_\kmone. \label{eq:condhk}
\end{equation}
Then, we want to evaluate the quality of the initial point $x_\kmone$ to minimize $h_k$.
\begin{equation}
\begin{split}
   h_k(x_\kmone) \!-\! h_k^\star \! & = h_\kmone(x_\kmone) + \frac{\kappa}{2}\|x_\kmone-y_\kmone\|^2 - \frac{\kappa}{2}\|x_\kmone-y_{\kmtwo}\|^2 - h_k^\star \\
                             & = h_\kmone(x_\kmone) - h_\kmone^\star + h_\kmone^\star - h_k^\star + \frac{\kappa}{2}\|x_\kmone-y_\kmone\|^2 - \frac{\kappa}{2}\|x_\kmone-y_{\kmtwo}\|^2  \\
                             & = h_\kmone(x_\kmone) - h_\kmone^\star + h_\kmone^\star - h_k^\star \!-\! \kappa(x_\kmone \!-\! y_\kmone  )^\top(y_\kmone\!-\!y_{\kmtwo})\! -\! \frac{\kappa}{2}\|y_\kmone\!-\!y_{\kmtwo}\|^2.  \\
\end{split}\label{eq:restart1}
\end{equation}
Then, we may use the fact that $h_k^\star$ can be interpreted as the Moreau-Yosida smoothing of the objective~$f$, defined as $G(y)=\min_{x \in \Real^p} F(x) + \frac{\kappa}{2}\|x-y\|^2$,
which gives us immediately a few useful results, as noted in~\cite{lin2018inexact}. Indeed, we know that $G$ is $\kappa$-smooth with $\nabla G(y_\kmone) = \kappa ( y_\kmone - x_k^\star )$ for all $k \geq 1$ and
\begin{equation}
\begin{split}
    h_\kmone^\star = G(y_{\kmtwo})  & \leq G(y_{\kmone}) + \nabla G(y_{\kmone})^\top(y_{\kmtwo}-y_{\kmone}) + \frac{\kappa}{2}\|y_\kmone - y_{\kmtwo}\|^2 \\
                                  & = h_k^\star  + \kappa (y_{\kmone} - x_k^\star)^\top(y_{\kmtwo} - y_{\kmone}) + \frac{\kappa}{2}\|y_\kmone - y_{\kmtwo}\|^2. \\
\end{split}\label{eq:restart3}
\end{equation}
Then, combining~(\ref{eq:restart1}) and~(\ref{eq:restart3}), 
\begin{equation*}
\begin{split}
   h_k(x_\kmone) - h_k^\star & \leq h_\kmone(x_\kmone) - h_\kmone^\star + \kappa ( x_\kmone - x_k^\star   )^\top (y_{\kmtwo} - y_{\kmone}). \\
                             & \leq h_\kmone(x_\kmone) - h_\kmone^\star + \kappa ( x_\kmone - x_\kmone^\star)^\top (y_{\kmtwo} - y_{\kmone}) \!+\! \kappa (x_\kmone^\star\!-\! x_k^\star)^\top (y_{\kmtwo} - y_{\kmone}) \\
                             & \leq h_\kmone(x_\kmone) - h_\kmone^\star + \kappa ( x_\kmone - x_\kmone^\star)^\top (y_{\kmtwo} - y_{\kmone}) + \kappa\|y_\kmone-y_\kmtwo\|^2 \\
                             & \leq h_\kmone(x_\kmone) - h_\kmone^\star + \frac{\kappa}{2}\|x_\kmone - x_\kmone^\star\|^2 + \frac{3\kappa}{2}\|y_\kmone-y_\kmtwo\|^2 \\
                             & \leq \frac{3}{2}(h_\kmone(x_\kmone) - h_\kmone^\star) + \frac{3\kappa}{2}\|y_\kmone-y_\kmtwo\|^2, \\
\end{split}
\end{equation*}
where the third inequality uses the non-expansiveness of the proximal operator; the fourth inequality uses the inequality $a^\top b \leq \frac{\|a\|^2}{2} + \frac{\|b\|^2}{2}$ for vectors $a,b$, and the last inequality uses the strong convexity of $h_\kmone$. 
Then, we may use the same upper-bound on $\|y_\kmone-y_\kmtwo\|$ as~\cite[Proposition 12]{catalyst_jmlr}, namely
\begin{displaymath}
   \|y_\kmone-y_\kmtwo\|^2 \leq 36 \max\left( \|x_\kmone-x^\star\|^2,\|x_\kmtwo-x^\star\|^2,\|x_{\kmthree}-x^\star\|^2\right), 
\end{displaymath}
where we define $x_{-1}=x_0$ if $k=2$.

\subsection{Proof of Proposition~\ref{prop:newcatalyst}}
The proof is similar to the derivation described in Section~\ref{appendix:sec:stoch_catalyst}.

\paragraph{Inner-loop complexity.}
With the choice of $\delta_k$, we have that $\xi_\kmone = O( \delta_\kmone/\sqrt{q})$. 
Besides, since we enforce $\E[H_k(x_k)-H_k^\star] \leq \delta_k$ for all $k \geq 0$, the result of Proposition~\ref{prop:restart} can be applied and the discussion following the proposition still applies, such that the complexity for computing $x_k$ is indeed $\tilde{O}(\tau/\eta_k)$.

\paragraph{Outer-loop complexity.}
Then, according to Proposition~\ref{prop:A}, it is easy to show that
$\E[F(x_k)-F^\star] \leq O((1-\sqrt{q}/2)^k(F(x_0)-F^\star))/\sqrt{q}$ and thus 
it suffices to choose 
\begin{displaymath}
    K = O\left(\frac{1}{\sqrt{q}}\log\left(\frac{F(x_0)-F^\star}{\sqrt{q}\varepsilon}\right)\right)
\end{displaymath}
iterations to guarantee $\E[F(x_K)-F^\star] \leq \varepsilon$.

\paragraph{Global complexity.}
We use the exact same derivations as in Section~\ref{appendix:sec:stoch_catalyst} except that 
we use the fact that $\varepsilon = O( \varepsilon_K/\sqrt{q}) = O((1-\sqrt{q}/3)^K(F(x_0)-F^\star)/\sqrt{q})$ instead
of $\varepsilon = O( \varepsilon_K/q)$, which gives us the desired complexity.

\section{Methods~$\mtd$ with Duality Gaps Based on Strongly-Convex Lower Bounds}\label{appendix:lower}
In this section, we summarize a few results from~\cite{kulunchakov2019estimate} and introduce minor modifications to guarantee the condition~(\ref{eq:newcatalyst}). 
For solving a stochastic composite objectives such as~(\ref{eq:risk}), where $F$ is $\mu$-strongly convex, consider an algorithm~$\mtd$ performing the following classical updates 
\begin{displaymath}
   z_t \leftarrow \text{Prox}_{\eta \psi}\left[z_{t-1} - \eta g_t    \right]~~~~\text{with}~~~~ \E[g_t|\Fcal_\kmone] = \nabla f(z_{t-1}),
\end{displaymath}
where $\eta \leq 1/L$, and the variance of $g_t$ is upper-bounded by $\sigma_t^2$. Inspired by estimate sequences from~\cite{nesterov}, the authors of~\cite{kulunchakov2019estimate} build recursively a
$\mu$-strongly convex quadratic function $d_t$ of the form 
\begin{displaymath}
    d_t(z) = d_t^\star + \frac{\mu}{2}\|z_t-z\|^2.
\end{displaymath}
From the proof of Proposition 1 in~\cite{kulunchakov2019estimate}, we then have 
\begin{displaymath}
   \E[d_t^\star] \geq (1-\eta\mu)\E[d_\kmone^\star] + \eta\mu \E[F(z_t)] - \eta^2 \mu \sigma_t^2,
\end{displaymath}
which leads to 
\begin{displaymath}
   F^\star - \E[d_t^\star] +  \eta\mu (\E[F(z_t)]-F^\star) \leq (1-\eta\mu)\E[F^\star - d_\kmone^\star]+ \eta^2 \mu \sigma_t^2,
\end{displaymath}
which is a minor modification of Proposition 1 in~\cite{kulunchakov2019estimate} that is better suited to our purpose.

\paragraph{With constant variance.}
Assume now that $\sigma_t = \sigma$ for all $k \geq 1$.
Following the iterate averaging procedure used in Theorem 1 of~\cite{kulunchakov2019estimate}, which produces an iterate $\hat{z}_t$, we obtain
\begin{equation}
\begin{split}
   \E[F(\hat{z}_t) - d_t^\star]  & \leq \left( 1- \eta\mu \right)^t \left( F(z_0)- d_0^\star \right) + \eta\sigma^2, \\
\end{split}\label{eq:resD}
\end{equation}
where $d_0^\star$ can be freely specified for the analysis: it is not used by the algorithm, but it influences~$d_t^\star$ through the relation $\E[d_t(z)] \leq \Gamma_t d_0(z) + (1-\Gamma_t)\E[F(z)]$ with $\Gamma_t = (1-\mu\eta)^k$, see Eq.~(11) in~\cite{kulunchakov2019estimate}. 
In contrast, Theorem 1 in~\cite{kulunchakov2019estimate} would give here
\begin{equation}
\begin{split}
   \E[F(\hat{z}_t) - F^\star + d_t(z^\star) - d_t^\star]  & \leq \left( 1- \eta\mu \right)^t \left( 2(F(z_0)- F^\star) \right)  + \eta \sigma^2, \\
\end{split}\label{eq:thm2}
\end{equation}
where $z^\star$ is a minimizer of $F$, which is sufficient to guarantee~(\ref{eq:aux}) given that $d_t(z^\star) \geq d_t^\star$.

\paragraph{Application to the minimization of $H_k$.}
Let us now consider applying the method to an auxiliary function $H_k$ from~(\ref{eq:newcatalyst}) instead of $F$, with initialization $x_\kmone$. After running $T$ iterations, 
define $h_k$ to be the corresponding function $d_T$ defined above and $x_k = \hat{z}_T$. $H_k$ is $(\kappa+\mu)$-strongly convex and thus $h_k$ is also $(\kappa+\mu)$-strongly convex such that \HypA~is satisfied. Let us now check possible choices for $d_0^\star$ to ensure~\HypB. For $z_\kmone=\alpha_\kmone x^\star + (1-\alpha_\kmone)x_\kmone$,
$
\E[d_T(z_\kmone)] \leq \Gamma_T d_0(z_\kmone) + (1-\Gamma_T) H_k(z_\kmone)
$ such that we simply need to choose $d_0^\star$ such that $\E[d_0(z_\kmone)] \leq \E[H_k(z_\kmone)]$. 
Then, choose
\begin{equation}
 d_0^\star  = H_k^\star - F(x_\kmone)+F^\star, \label{eq:d0}
\end{equation}
and
\begin{displaymath}
\begin{split}
   d_0(z_\kmone) & = d_0^\star + \frac{\kappa+\mu}{2}\|x_\kmone - z_\kmone\|^2  \\
          & = d_0^\star + \frac{(\kappa+\mu) \alpha_\kmone^2}{2}\|x_\kmone - x^\star\|^2  \\
          & = d_0^\star + \frac{\mu}{2}\|x_\kmone - x^\star\|^2  \\
          & \leq  d_0^\star + F(x_\kmone)-F^\star  
           =  H_k^\star    
           \leq  H_k(z_\kmone),   \\
\end{split}
\end{displaymath}
such that \HypB~is satisfied, and finally~(\ref{eq:resD}) becomes
\begin{equation*}
\E[H_k(x_k) - h_k^\star] \leq \left( 1- \eta(\mu+\kappa) \right)^T \left( H_k(x_\kmone)-H_k^\star + F(x_\kmone)-F^\star\right) + \eta\sigma^2,
\end{equation*}
which matches~(\ref{eq:newcatalyst}).

\paragraph{Variance-reduction methods.}
In~\cite{kulunchakov2019estimate}, gradient estimators $g_t$ with variance
reduction are studied, leading to variants of SAGA~\cite{saga},
MISO~\cite{miso}, and SVRG~\cite{proxsvrg}, which can deal with the stochastic
finite-sum problem presented in Section~\ref{sec:intro}. Then, the variance of
$\sigma_t^2$ decreases (Proposition 2 in~\cite{kulunchakov2019estimate}).

Let us then consider again the guarantees of the method obtained when minimizing $F$ with $\frac{\mu}{L} \leq \frac{1}{5n}$.
From Corollary 5 of~\cite{kulunchakov2019estimate}, we have 
\begin{equation*}
\E[F(\hat{z}_t) - F^\star + d_t(z^\star)-d_t^\star] \leq 8\left( 1-\mu \eta \right)^t\left(F(x_0)- F^\star  \right)  + 18\eta\sigma^2, 
\end{equation*}
and~(\ref{eq:aux}) is satisfied.
Consider now two cases at iteration $T$:
\begin{itemize}[leftmargin=*]
  \item if $\E[d_T(z^\star)] \geq F^\star$, then we have $\E[F(\hat{z}_T) -d_T^\star] \leq 8\left( 1-\mu \eta \right)^T\left(F(x_0)- F^\star  \right)  + 18\eta\sigma^2$.
  \item otherwise, it is easy to modify Theorem 2 and Corollary 5 of~\cite{kulunchakov2019estimate} to obtain 
\begin{equation}
\E[F(\hat{z}_T) - d_T^\star] \leq \left( 1-\mu \eta \right)^T\left(2(F(x_0)- F^\star) + 6 (F^\star - d_0^\star)  \right)  + 18\eta\sigma^2,\label{eq:daux}
\end{equation}
\end{itemize}

\paragraph{Application to the minimization of $H_k$.}
Consider now applying the method for minimizing $H_k$, with the same choice of
$d_0^\star$ as~(\ref{eq:d0}), which ensures~\HypB, and same definitions as above for $x_k$ and $h_k$.
Note that the conditions on $\mu$ and $L$ above are satisfied when $\kappa=\frac{L}{5n}-\mu$ under the condition $\frac{L}{5n} \geq \mu$.
Then, we have from the previous results, after replacing $F$ by $H_k$ making the right subsitutions
\begin{displaymath}
\E[H_k(x_k) - h_k^\star] \leq \left( 1-(\mu+\kappa) \eta \right)^T\left(8(H_k(x_\kmone)- H_k^\star) + 6 (F(x_\kmone) - F^\star)  \right)  + 18\eta\sigma^2,\label{eq:daux}
\end{displaymath}
and~(\ref{eq:newcatalyst}) is satisfied.

\paragraph{Other schemes.}
Whereas we have presented approaches were $d_t$ is
quadratic,~\cite{kulunchakov2019estimate} also studies another class of
algorithms where $d_t$ is composite (see Section 2.2
in~\cite{kulunchakov2019estimate}). The results we present in this paper can be
extended to such cases, but for simplicity, we have focused on quadratic surrogates.

\section{Additional Experimental Material}\label{appendix:exp}

\paragraph{Computing resources.}
The numerical evaluation was performed by using four nodes of a CPU cluster with
56 cores of Intel CPUs each. The full set of experiments presented in this
paper (with $5$ runs for each setup) takes approximately half a day.

\paragraph{Making plots.}
We run each experiment five times and average the outputs. 
We display plots on a logarithmic scale for the primal gap~$F(x_k)-F^\star$
(with~$F^\star$ estimated as the minimum value observed from all runs). Note that for SVRG, one iteration is considered to perform two epochs since it requires accessing the full dataset every $n$ iterations on average. 

\subsection{Additional experiments.}

\paragraph{Acceleration with no noise, $\delta=0$.}
We start evaluating the acceleration approach when there is no noise. This is essentially evaluating the original Catalyst method~\cite{catalyst_jmlr} in a deterministic setup in order to obtain a baseline comparison when $\delta=0$. The results are presented in Figures~\ref{1dropout0loss0.pdf} and~\ref{2dropout0loss0.pdf} for the logistic regression problem.
As predicted by theory, acceleration is more important when conditioning is low (bottom curves).

\begin{figure}[!hptb]
  \includegraphics[width=\linewidth]{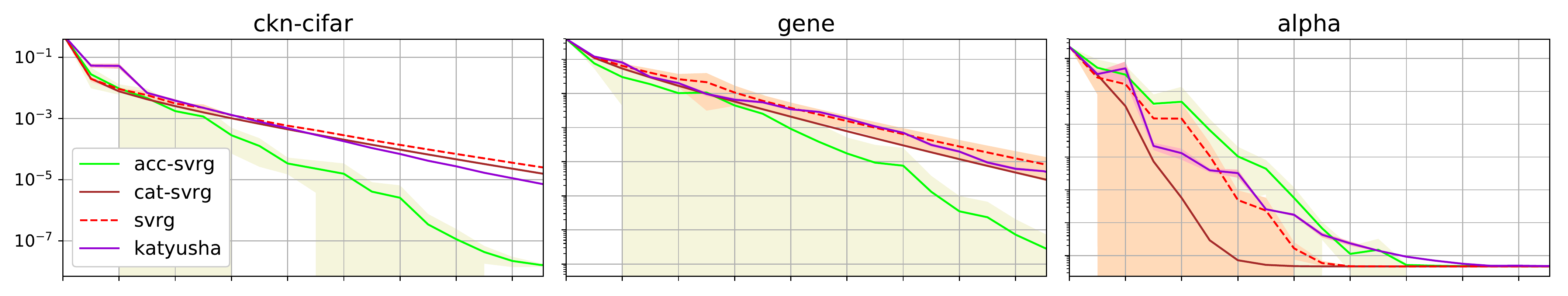}\vspace{-0.22cm}\\
  \includegraphics[width=\linewidth]{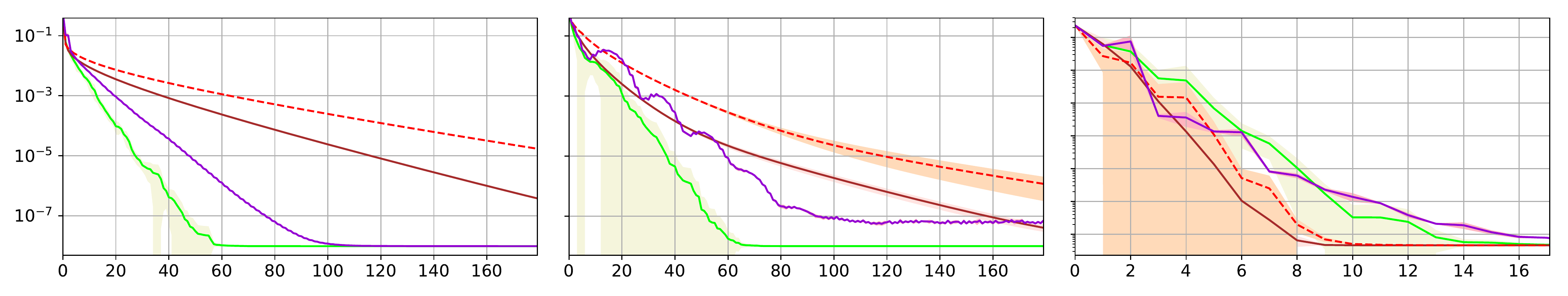}\\
  \vspace*{-0.5cm}
  \caption{Accelerating SVRG-like methods for $\ell_2$-logistic regression with $\mu=1/(10 n)$ (top) and $\mu = 1/(100 n)$  (bottom) for $\delta=0$.
  All plots are on a logarithmic scale for the objective function value, and the $x$-axis denotes the number of epochs.
  The colored tubes around each curve denote a standard deviations across $5$ runs. They do not look symmetric because of the logarithmic scale.
  }\label{1dropout0loss0.pdf}
\end{figure}

\begin{figure}[!hptb]
  \includegraphics[width=\linewidth]{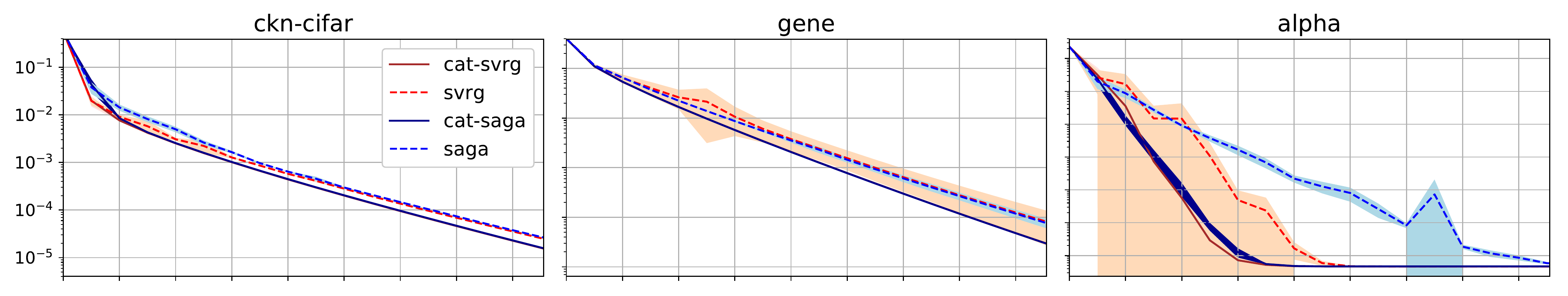}\vspace{-0.22cm}\\
  \includegraphics[width=\linewidth]{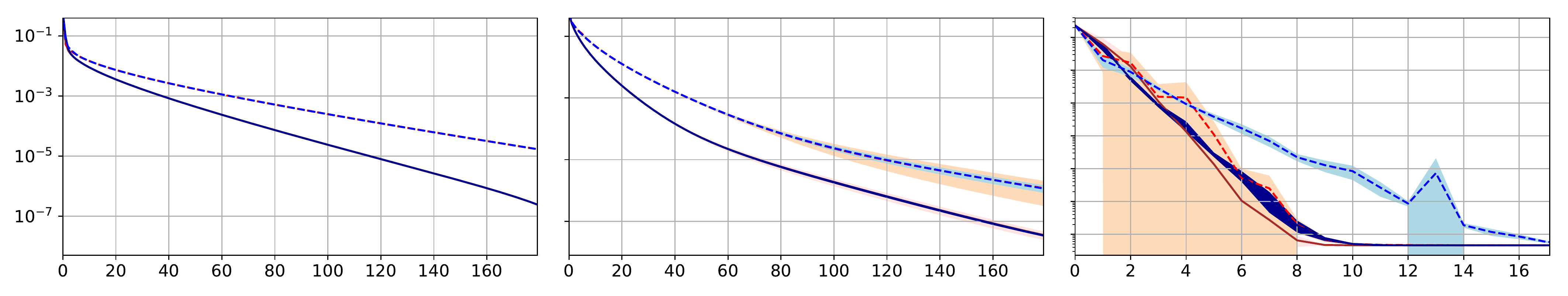}\\
  \vspace*{-0.5cm}
  \caption{Same plots as in Figure~\ref{1dropout0loss0.pdf} when comparing SVRG and SAGA, with no noise ($\delta=0$) with $\mu=1/(10 n)$ (top) and $\mu = 1/(100 n)$  (bottom) .}\label{2dropout0loss0.pdf}
\end{figure}

\paragraph{Stochastic acceleration with no noise, $\delta=0.01$ and $\delta=0.1$.}
Then, we perform a similar experiments by adding noise and report the results in Figures~\ref{1dropout1loss0.pdf}, \ref{2dropout1loss0.pdf}, \ref{1dropout2loss0.pdf}, \ref{2dropout2loss0.pdf}.
In general, the stochastic Catalyst approach seems to perform on par with the accelerated SVRG approach of~\cite{kulunchakov2019estimate} and even better in one case.

\begin{figure}[!htbp]
  \includegraphics[width=\linewidth]{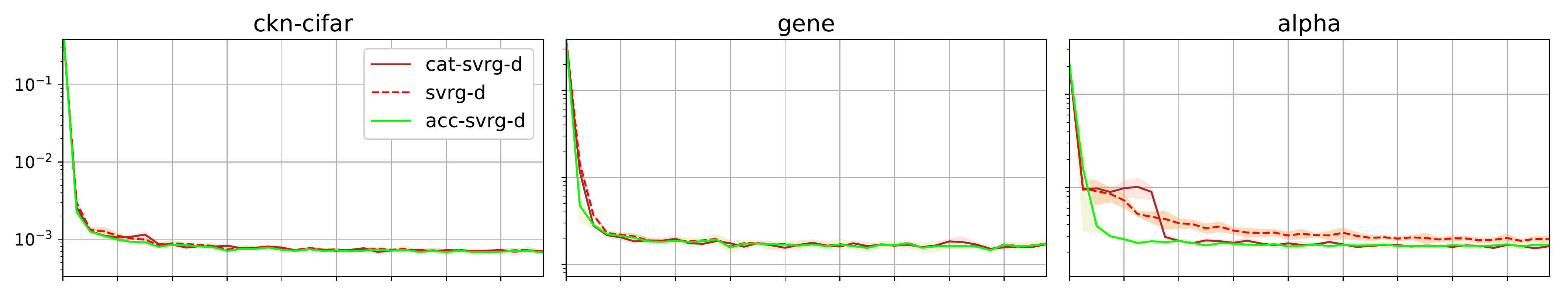}\vspace{-0.22cm}\\
  \includegraphics[width=\linewidth]{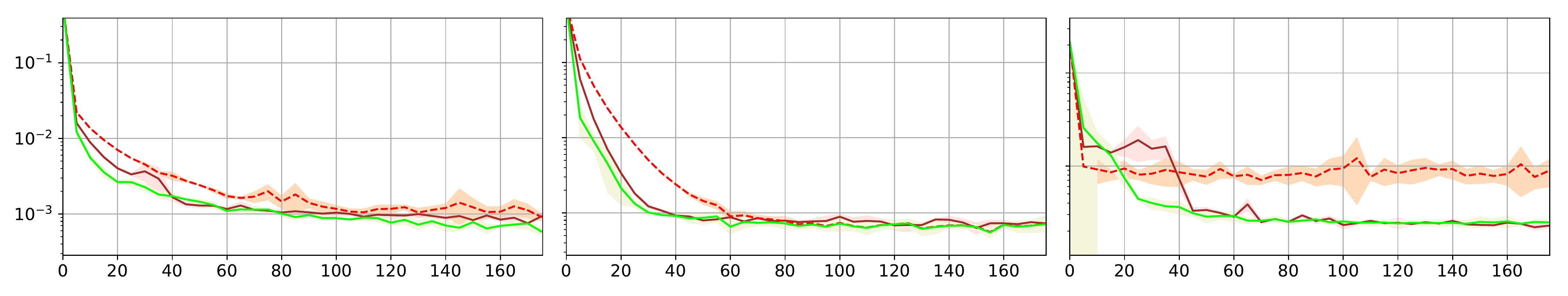}\\
  \vspace*{-0.5cm}
  \caption{Same plots as in Figure~\ref{1dropout0loss0.pdf} for $\delta=0.01$ with $\mu=1/(10 n)$ (top) and $\mu = 1/(100 n)$  (bottom).}\label{1dropout1loss0.pdf}
\end{figure}

\begin{figure}[!htb]
  \includegraphics[width=\linewidth]{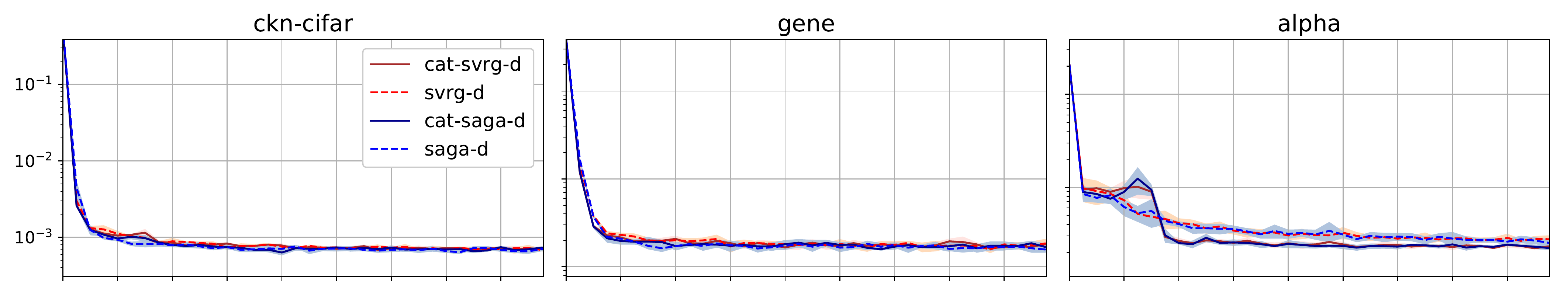}\vspace{-0.22cm}\\
  \includegraphics[width=\linewidth]{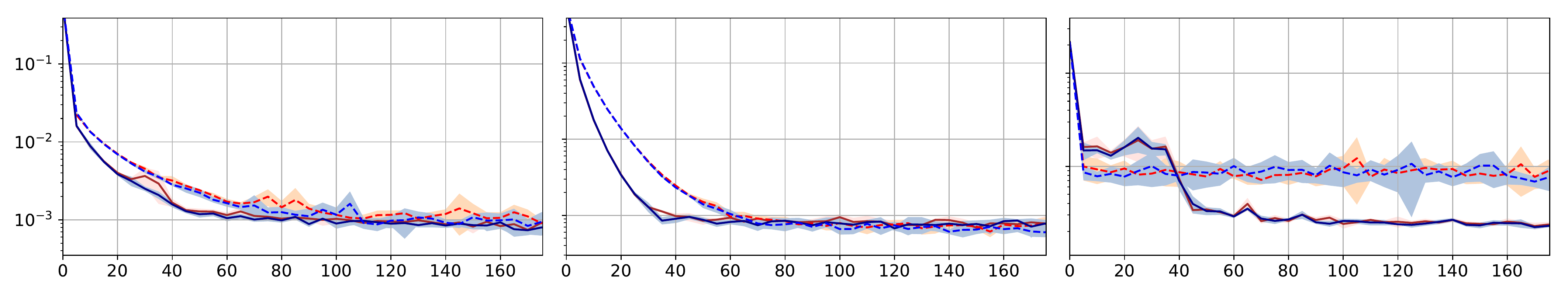}\\
  \vspace*{-0.5cm}
  \caption{Same plots as in Figure~\ref{2dropout0loss0.pdf} for $\delta=0.01$ with $\mu=1/(10 n)$ (top) and $\mu = 1/(100 n)$  (bottom).}\label{2dropout1loss0.pdf}
\end{figure}

\begin{figure}[!htb]
  \includegraphics[width=\linewidth]{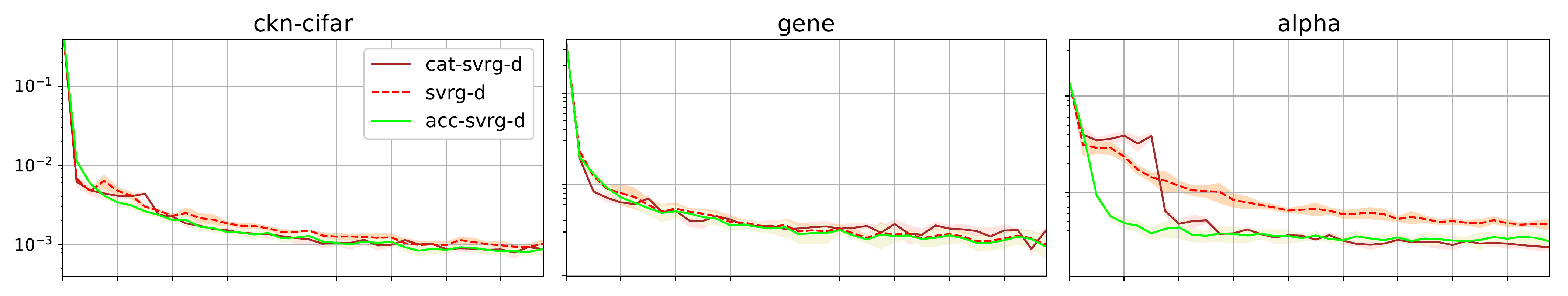}\vspace{-0.22cm}\\
  \includegraphics[width=\linewidth]{one-pass-iv_lam100_d10_loss0.pdf}\\
  \vspace*{-0.5cm}
  \caption{Same plots as in Figure~\ref{1dropout0loss0.pdf} for $\delta=0.1$ with $\mu=1/(10 n)$ (top) and $\mu = 1/(100 n)$  (bottom).}\label{1dropout2loss0.pdf}
\end{figure}

\begin{figure}[!htb]
  \includegraphics[width=\linewidth]{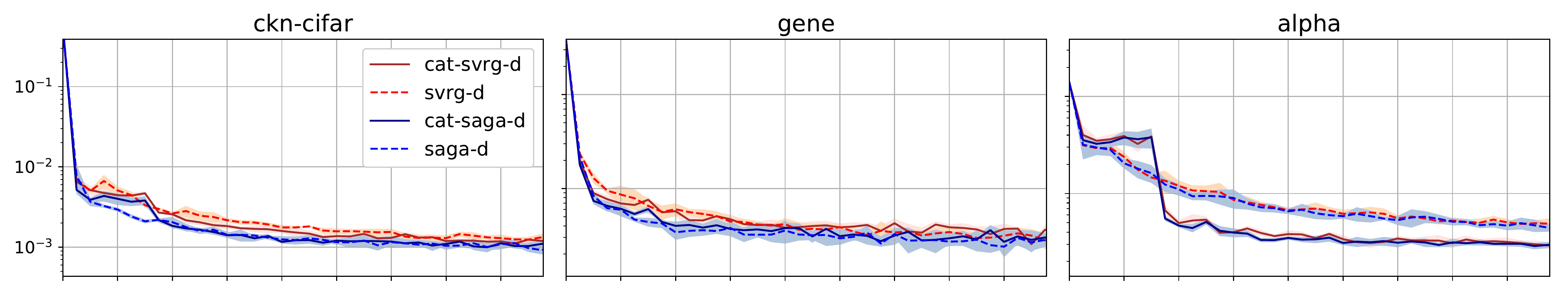}\vspace{-0.22cm}\\
  \includegraphics[width=\linewidth]{one-pass-vi_lam100_d10_loss0.pdf}\\
  \vspace*{-0.5cm}
  \caption{Same plots as in Figure~\ref{2dropout0loss0.pdf} for $\delta=0.1$ with $\mu=1/(10 n)$ (top) and $\mu = 1/(100 n)$  (bottom).}\label{2dropout2loss0.pdf}
\end{figure}

\paragraph{Evaluating the square hinge loss.}
In Figure~\ref{1loss1.pdf}, we perform experiments using the square hinge loss, where the methods perform similarly as for the logistic regression case, despite the fact that the bounded noise assumption does not necessarily hold on the optimization domain for the square hinge loss.

\begin{figure}[!htb]
  \includegraphics[width=\linewidth]{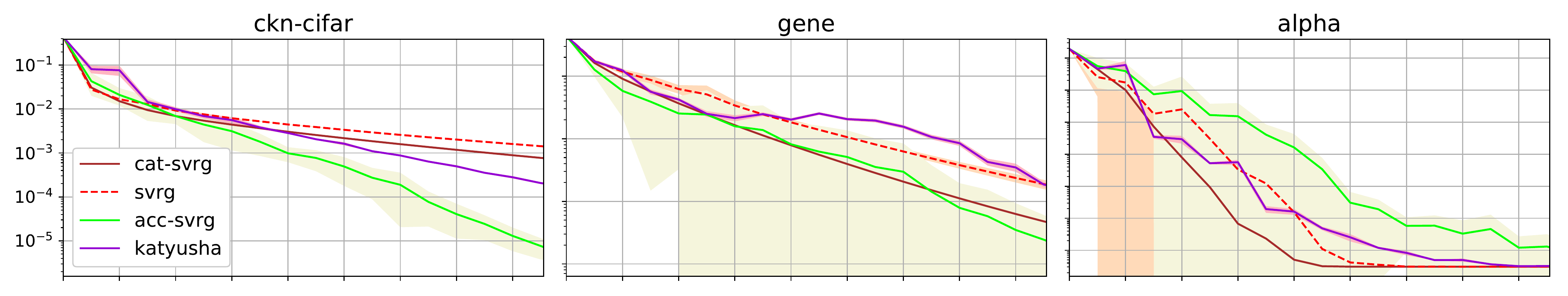}\vspace{-0.2cm}\\
  \includegraphics[width=\linewidth]{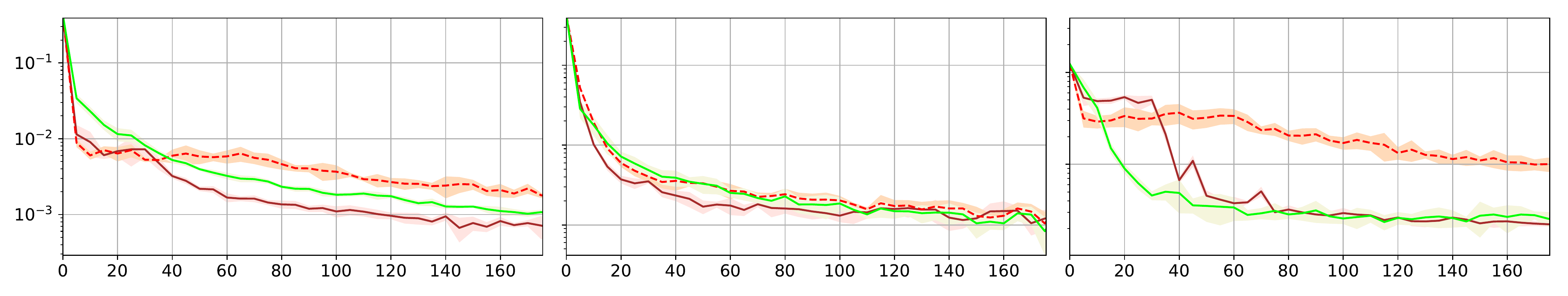}\\
  \vspace*{-0.5cm}
  \caption{Accelerating SVRG-like methods when using the squared hinge loss instead of the logistic for $\delta=0$ (top) and $\delta=0.1$, both with $\mu=1/(10 n)$.}\label{1loss1.pdf}
\end{figure}

\begin{figure}[!htb]
  \includegraphics[width=\linewidth]{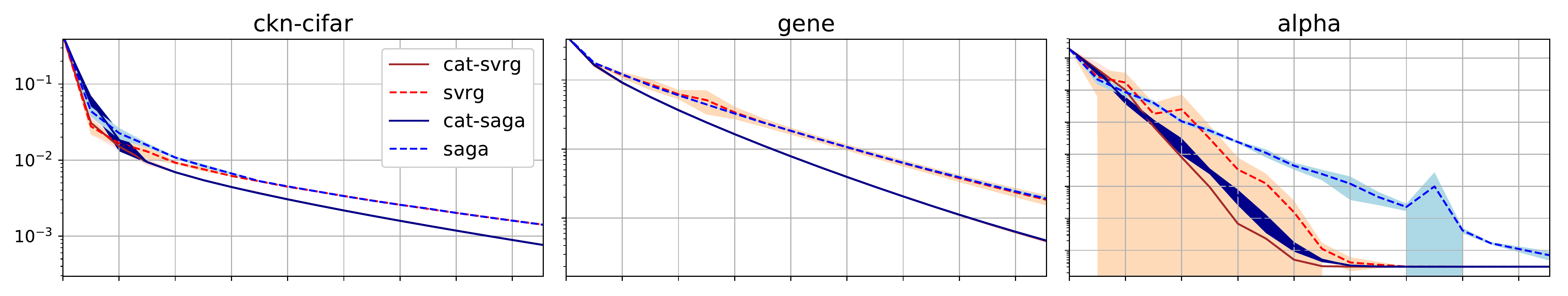}\vspace{-0.2cm}\\
  \includegraphics[width=\linewidth]{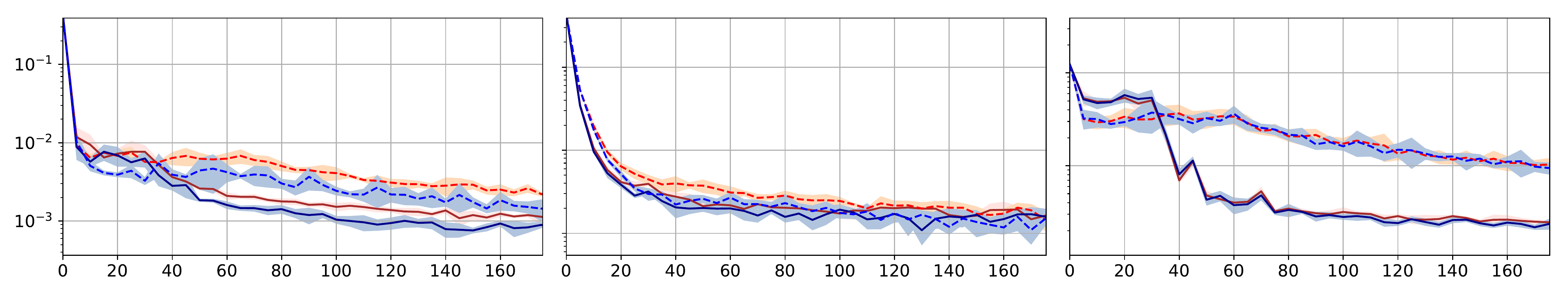}\\
  \vspace*{-0.5cm}
  \caption{Same plots as in Figure~\ref{1loss1.pdf} for SVRG and SAGA, with $\delta=0$ (top) and $\delta=0.1$ for $\mu=1/(10 n)$.}\label{2loss1.pdf}
\end{figure}

\paragraph{Evaluating ill-conditioned problems.}
Finally, we study in Figure~\ref{instabilities} how the methods behave when the problems are badly conditioned. There, acceleration seems to work on ckn-cifar, but fails on gene and alpha, suggestions that acceleration is difficult to achieve when the condition number is extremely low.
\begin{figure}[!htb]
  \includegraphics[width=\linewidth]{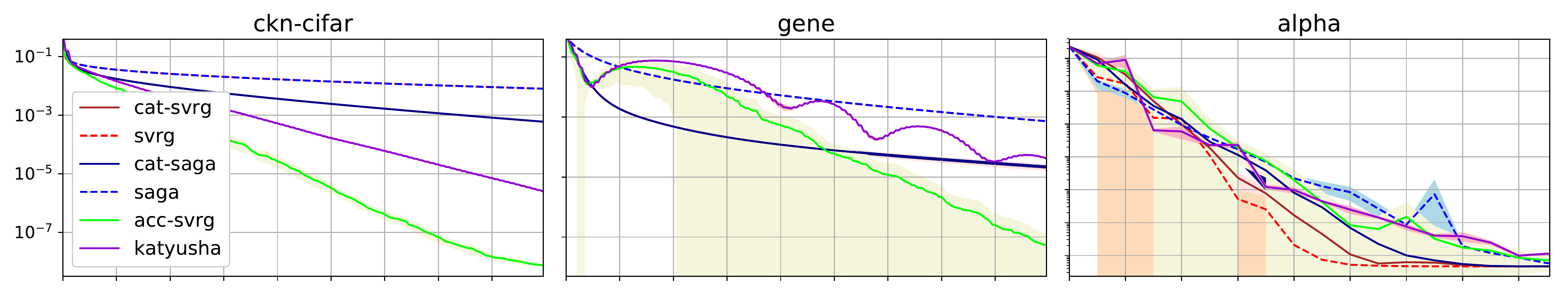}\vspace{-0.2cm}\\
  \includegraphics[width=\linewidth]{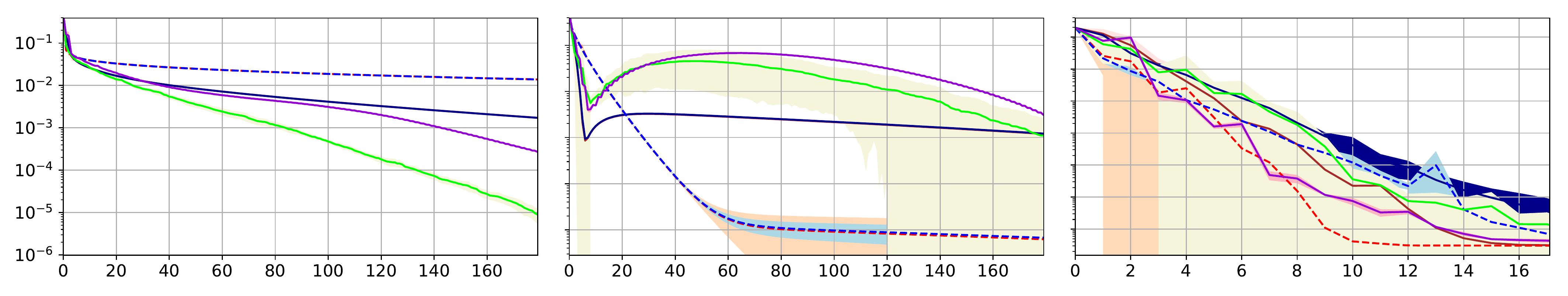}\\
  \caption{Illustration of potential numerical instabilities problems when the problem is very ill-conditioned. We use~$\mu=1/(1000 n)$ with $\delta=0$ for the logistic loss (top) and squared hinge (bottom).}\label{instabilities}
\end{figure}

\end{document}